\numberwithin{equation}{subsection}
\newcommand{\RR}{\mathbb{R}}
\newcommand{\NN}{\mathbb{N}}
\newcommand{\frg}{\frak{g}}
\newcommand{\frh}{\frak{h}}
\newcommand{\Id}{\mathrm{id}}
\newcommand{\ve}{\varepsilon}
\newcommand{\cyc}[3]{\sum_{\mathrm{cyc}\,(#1,#2,#3)}}
\newcommand{\longto}{\longrightarrow}
\newcommand{\longsto}{\longmapsto}
\newcommand{\evf}{\mathscr{E}}
\newcommand{\Rnabla}{R^\nabla}
\newcommand\equalhat{\mathrel{\stackon[1.5pt]{=}{\stretchto{%
    \scalerel*[\widthof{=}]{\wedge}{\rule{1ex}{3ex}}}{0.5ex}}}}
\newcommand{\extp}{\@ifnextchar^\@extp{\@extp^{\,}}}
\def\@extp^#1{\mathop{\bigwedge\nolimits^{\!#1}}}
\newtheorem{theorem}{Theorem}
\newtheorem{proposition}{Proposition}[section]
\newtheorem{definition}{Definition}[section]
\newtheorem{lemma}{Lemma}[section]
\newtheorem{corollary}{Corollary}[section]
\newtheorem*{theorem1}{Theorem 1}
\newtheorem*{theorem2}{Theorem 2}
\DeclareMathOperator{\Sym}{Sym}
\DeclareMathOperator{\Hol}{Hol}
\DeclareMathOperator{\hol}{\frak{hol}}
\DeclareMathOperator{\GL}{GL}
\DeclareMathOperator{\End}{End}
\DeclareMathOperator{\perm}{perm}
\DeclareMathOperator{\gen}{span}
\DeclareMathOperator{\ad}{ad}
\DeclareMathOperator{\Ad}{Ad}
\title{Torsion-free Connections with prescribed Curvature}
\author{Efrain Basurto-Arzate}
\address{Fakult\"at f\"ur Mathematik, Technische Universit\"at Dortmund, Dortmund, Germany}
\email{efrain.basurto@math.tu-dortmund.de}
\date{}
\begin{document}

\maketitle

\begin{abstract}
 We provide necessary and sufficient conditions for a curvature map $S\colon U\longto K(\frg)$ to arise from the curvature tensor of a torsion-free connection on a sufficiently small $U'\subset U$ by using a suitable power series approach. This torsion-free connection is uniquely given. The curvature map $S$ is used to effectively compute the holonomy of this connection.
\end{abstract}

\tableofcontents

\section{Introduction}\label{Sec:Intro}

Since its genesis in the middle of the 19th century, the concept of a connection has grown to become one of the classical objects of study in Differential Geometry. In its most elementary interpretation, a connection provides us with a way to differentiate sections of a vector bundle. Formally, a connection on the vector bundle $E\longto M$ is a linear map $\nabla\colon\Gamma(E)\longto\Gamma(T^*M\otimes E)$ that satisfies the \emph{Leibniz rule}:
\[
 \nabla(fs)=\dd f\otimes s+f\nabla s,
\]
for all $s\in \Gamma(E)$, $f\in C^\infty(M)$.

Associated to the connection $\nabla$ is its curvature tensor, which is defined as the unique smooth tensor field  $R^\nabla\in\Gamma(\extp^2T^*M\otimes\End(E))$ given by
\[
 R^\nabla(X,Y)s\coloneqq R^\nabla (X\wedge Y\otimes s)=\nabla_X\nabla_Ys-\nabla_Y\nabla_Xs-\nabla_{[X,Y]}s,
\]
for every $X,Y\in\frak{X}(M)$, $s\in\Gamma(E)$.

In the particular case $E=TM$, a connection naturally defines another important tensor field, the so-called torsion tensor, which is defined as the tensor field $T^\nabla\in\Gamma(\extp^2T^*M\otimes TM)$ given by
\[
 T^\nabla(X,Y)\coloneqq\nabla_XY-\nabla_YX-[X,Y].
\]

The connection is said to be \emph{torsion-free} if $T^\nabla\equiv 0$.

The curvature tensor of a torsion-free connection satisfies the \emph{Bianchi identities}:

\begin{subequations} 
\begin{align} 
\cyc{X}{Y}{Z}R^\nabla(X,Y)Z\coloneqq& R^\nabla(X,Y)Z+R^\nabla(Y,Z)X+R^\nabla(Z,X)Y=0, \label{Eq:1Bianchi}\tag{B1}\\ 
\cyc XYZ(\nabla_XR^\nabla)(Y,Z)=&(\nabla_XR^\nabla)(Y,Z)+(\nabla_YR^\nabla)(Z,X)+(\nabla_ZR^\nabla)(X,Y)=0\label{Eq:2Bianchi}\tag{B2}
\end{align} 
\end{subequations}

The Bianchi identities can be algebraically encapsulated. Indeed, given a finite vector space $V$, for a Lie subalgebra $\frh\subset\frg\coloneqq\frak{gl}(V)$ we define the space of \emph{algebraic curvature tensors}, and that of \emph{algebraic curvature derivatives} as the the subspaces
\begin{align*}
 K(\frh)\coloneqq\qty{R\in\extp^2V^*\otimes\frh\;\left\vert\;\cyc xyz R(x,y)z=0\in V\,\forall x,y,z\in V\right.},\\
 K^1(\frh)\coloneqq\qty{\phi\in V^*\otimes K(\frh)\;\left\vert\;\cyc xyz \phi(x)(y,z)=0\in\frh\;\forall x,y,z\in V\right.}.
\end{align*}

A consequence of the Ambrose-Singer holonomy Theorem (see Proposition \ref{Prop:Ambrose-Singer}) is the fact that for a torsion-free connection $\nabla$ on a smooth, connected manifold $M$, its curvature tensor $R^\nabla$ satisfies, for every $x\in M$:
\begin{align*}
 R_x^\nabla\in& K(\hol_x(\nabla)),\\
 (\nabla R^\nabla)_x\in& K^1(\hol_x(\nabla)).
\end{align*}

These facts esentially are the motivation for the main question tackled in this article:

\begin{center}
 \emph{Under which conditions can it be guaranteed, that a given curvature map $S\colon U\subset V\longto K(\frak{g})$ can be induced by the curvature tensor of a torsion-free connection?}
\end{center}

The Main Theorem of the article provides an answer to the previous question:

\begin{theorem1}\label{Thm:MainThm}
  Let $U\subset V$ a star-shaped around $0$ open subset, let $S\colon U\longto K(\frak{g})$ be a real-analytic map, $\theta$ the analytic solution to the singular initial value problem
 \[
  \begin{cases}
   \mathcal{L}_\evf(\mathcal{L}_\evf-\Id)\theta=S(\evf,\theta)\evf,\\
   \theta_0=\Id,\,\dd\theta_0=0,
  \end{cases}
 \]
and suppose $\omega\coloneqq I(S(\evf,\theta))$ satisfies the consistency relation $\dd\omega+\omega\wedge\omega=S(\theta,\theta)$. Then there exists a unique torsion-free analytic connection $\nabla$ on a sufficiently small open neighborhood $U'\subset U$ of $0$ such that for all $v\in U'$:
  \begin{equation}\label{Eq:S=PRIntro}
  \tag{\ref{Eq:S=PR}}
   S_v=P_{1,0}\cdot \Rnabla_{\gamma_v(1)},
  \end{equation}
  where $P_{1,0}$ denotes the inverse of the parallel translation map along the radial geodesic $\gamma_v(t)\coloneqq tv$, and $\Rnabla$ denotes the curvature tensor of the connection $\nabla$.
 \end{theorem1}
 
 In the statement of Theorem \ref{Thm:MainThm}, $\mathcal{L}_\evf$ denotes the Lie derivative along the the Euler vector field $\evf$, whereas $I$ denotes the \emph{integration map} on vector-valued forms on $U$ introduced in Section \ref{Sec:NecConditions} (see \eqref{Eq:ExplicitI}).
 
 As it turns out, Theorem \ref{Thm:MainThm} also provides us with a way to effectively compute the holonomy of torsion-free connections. Indeed, we have the following
 
 \begin{theorem2}\label{Thm:Thm2}
  Let $S\colon U\longto K(\frg)$, $\theta\in\Omega^1(U,V)$, $\omega\in\Omega^1(U,\frg)$ as in Theorem \ref{Thm:MainThm}. Let $\nabla$ be the torsion-free connection on $TU'$ which satisfies \eqref{Eq:S=PR}. It holds:
  \begin{equation}\label{Eq:holOfNablaSIntro}
  \tag{\ref{Eq:holOfNablaS}}
   \hol_0(\nabla)=\gen\qty{S_v(x,y)\;\vert\;v\in U',\,x,y\in V}.
  \end{equation}
 \end{theorem2}
 
 In other words, Theorem \ref{Thm:Thm2} states, that to compute the holonomy of torsion-free connections, it is enough to consider the action of the parallel translation map along radial geodesics, provided the initial curvature map is amenable enough.
 
 The article is structured as follows: Section \ref{Sec:Prelim} presents the required tools for the proper statement and proof of the results. In \ref{Subsec:CurvAndTor} some elementary properties of the curvature tensor of torsion-free connections are discussed, and how all of them are to be interpreted with respect to the invariant counterparts of these objects in the principal bundle setting. In \ref{Subsec:HolonomyPrelim} some basic facts and classical results on holonomy theory are collected. \ref{Subsec:ExpFraming} introduces the notion of an \emph{exponential framing}, a particular local trivialization given by parallel translation along radial geodesics which turns out to be the adequate interpretation in tackling the main problem.
 
Section \ref{Sec:NecConditions} provides us with the necessary conditions a curvature map needs to satisfy for it to arise from the curvature tensor of a torsion-free connection. In \ref{Subsec:PrincBundlePicture} we look in deeper detail into the invariant counterparts of torsion-free connections and its curvature tensor and how it naturally leads to a relevant consistency relation between the curvature form and an appropriate curvature map (see \eqref{Eq:F-RThetaTheta}, \eqref{Eq:Pulled-back F-RThetaTheta}). \ref{Subsec:AnIVP} is devoted to show that the tautological form, when pull-backed by an exponential framing, is a solution to the singular initial value problem stated in Proposition \ref{Prop:IVPThetaHat}. 

Section \ref{Sec:MainResult} is almost exclusively dedicated to the proof of Theorem \ref{Thm:MainThm}. It begins by showing Proposition \ref{Prop:MainIVP}, which deals with the existence and uniqueness of analytic solutions for the singular initial value problem introduced in \ref{Subsec:AnIVP}. Later in Proposition \ref{Prop:ThetaProperties} it is showed that this analytic solution behaves like a pulled-back tautological form, which ultimately ends up being the case, as proved in Theorem \ref{Thm:MainThm}.

Section \ref{Sec:Holonomy} entirely consists of the proof of Theorem \ref{Thm:Thm2}, as well as of Corollary \ref{Cor:HolonomyInSubalg}.

\section*{Acknowledgment} The author wishes to express his gratitude to L. Schwachhöfer for the valuable discussions during the preparation of the article, as well as for his comments on previous versions of this paper, which greatly helped to improve the presentation.

\section{Preliminaries}\label{Sec:Prelim}

\subsection{The Curvature Tensor of an Affine Connection}\label{Subsec:CurvAndTor}

Throughout this article, unless otherwise stated, $M$ is a smooth, connected, and simply-connected $n$-manifold, $\nabla$ a connection on $TM$, $V=\RR^n$ equipped with its standard inner product, $G$ the Lie group $\GL(V)$, whereas its Lie algebra will be denoted by $\frg$.

In this setting, we define the \emph{torsion} and the \emph{curvature} of the connection $\nabla$ as the tensor fields $T^\nabla\in\Gamma\qty(\extp^2T^*M\otimes TM)$, $R^\nabla\in\Gamma\qty(\extp^2T^*M\otimes\End(TM))$ defined by
\begin{align}
 T^\nabla(X,Y)&=\nabla_XY-\nabla_YX-[X,Y],\label{Eq:TorsionTensor}\\
 R^\nabla(X,Y)&=\nabla_X\nabla_Y-\nabla_Y\nabla_X-\nabla_{[X,Y]},\label{Eq:CurvatureTensor}
\end{align}
for all vector fields $X,Y\in\frak{X}(M)$.

In the language of principal bundles, let $(F(M),\pi,M;G)$ denote the frame bundle of the manifold $M$. That is, the $G$-principal bundle, where each fiber $F(M)_x$ consists of the linear isomorphisms $u\colon V\longto T_xM$. The connection $\nabla$ uniquely defines a connection form\\ $\omega^\nabla\in\Omega^1(F(M),\frg)$. This form is characterized by two conditions:
\begin{itemize}[noitemsep]
 \item[i)] For every $g\in G$, $R_g^*\omega^\nabla=\Ad(g^{-1})\circ\omega^\nabla$, where $R_g\colon F(M)\longto F(M)$ denotes the action of $G$ on $F(M)$, that is, $R_g(u)=ug\coloneqq u\circ g$.\\
 \item[ii)] For every $X\in\frg$, $\omega^\nabla(\widetilde{X})=X$, where $\widetilde{X}$ denotes the vector field on $F(M)$ given by\\ $\widetilde{X}_u\coloneqq\eval{\dv{}{t}}_0ue^{tX}$.
\end{itemize}

These conditions imply that $u\longto \ker\omega^\nabla_u$ defines a right-invariant distribution on $F(M)$, which is called the \emph{horizontal distribution} of $\nabla$. Relevant is the fact that any vector field\\ $X\in\frak{X}(M)$ admits a unique \emph{horizontal lift}. That is, a vector field $\overline{X}\in\Gamma(\ker\omega^\nabla)$ $\pi$-related to $X$.

The \emph{tautological form} of the connection is defined as the form $\theta^\nabla\in\Omega^1(F(M),V)$ given by $\theta^\nabla_u(X)\coloneqq u^{-1}(\dd\pi(X))$, for $u\in F(M)_{\pi(u)}$, $X\in T_uF(M)$.

In terms of the connection and tautological forms one defines the \emph{torsion form} $\Theta^\nabla\in\Omega^2(F(M),V)$, and the \emph{curvature form} $F^\nabla\in\Omega^2(F(M),\frg)$ by the formulas:
\begin{align}
 \Theta^\nabla\coloneqq& \dd\theta^\nabla+\omega^\nabla\wedge\theta^\nabla,\label{Eq:TorsionForm}\\
 F^\nabla\coloneqq&\dd\omega^\nabla+\omega^\nabla\wedge\omega^\nabla.\label{Eq:CurvatureForm}
\end{align}

The fundamental link between the torsion and curvature forms with their tensor field counterparts is given through Lie differentiation. Concretely, for $k\in\NN_0$, $x\in M$, $u\in F(M)_x$, $X_1,\ldots, X_k,Y,Z\in\frak{X}(M)$ (\cite[Chapter~III, Theorem~5.1]{kobayashi1963foundations}):
\begin{align}
 ((\nabla^k_{X_1,\ldots,X_k}T^\nabla)(Y,Z))_x=&u(\mathcal{L}_{\overline{X}_1}\cdots\mathcal{L}_{\overline{X}_k}\Theta^\nabla(\overline{Y},\overline{Z}))_u,\label{Eq:T=uTheta}\\
 ((\nabla^k_{X_1,\ldots,X_k}R^\nabla)(Y,Z))_x=&u(\mathcal{L}_{\overline{X}_1}\cdots\mathcal{L}_{\overline{X}_k}F^\nabla(\overline{Y},\overline{Z}))_uu^{-1}.\label{Eq:R=AduF}
\end{align}

We say that the connection $\nabla$ is \emph{torsion-free} if its torsion identically vanishes. For the remaining of the work we assume the connection $\nabla$ to be torsion-free. In this situation the curvature tensor satisfies the \emph{first} and \emph{second Bianchi identities} \eqref{Eq:1Bianchi}, \eqref{Eq:2Bianchi}. That is, for every vector fields $X,Y,Z$,

\begin{subequations} 
\begin{align*} 
\cyc{X}{Y}{Z}R^\nabla(X,Y)Z=& 0,\\  
\cyc XYZ(\nabla_XR^\nabla)(Y,Z)=&0.
\end{align*} 
\end{subequations}

In terms of the connection and tautological forms, the first and second Bianchi identities respectively read:
\begin{subequations}
\begin{align}
 F^\nabla\wedge\theta^\nabla=&0,\label{Eq:1BIForm}\\
 \dd F^\nabla+\ad(\omega^\nabla)\wedge F^\nabla=&0\label{Eq:2BIForm}
\end{align}
\end{subequations}

Motivated by the Bianchi identities satisfied by the curvature tensor of the torsion-free connection $\nabla$ we make the following

\begin{definition}
 Let $\frak{h}\subset\frak{g}$ be a Lie subalgebra. We define for $m\in\NN_0$ the \emph{space of $m$-th order algebraic curvature derivatives} as the linear subspace of $\Sym^mV^*\otimes\extp^2V^*\otimes\frak{h}$ defined by 
 \begin{equation}\label{Eq:KmOfh}
  \begin{split}
   K^{(m)}(\frak{h})=&\ker\qty{\Sym^mV^*\otimes\extp^2V^*\otimes\frak{h}\longto\Sym^mV^*\otimes\extp^3V^*\otimes V}\\
    &\cap\ker\qty{\Sym^mV^*\otimes\extp^2V^*\otimes\frak{h}\longto\Sym^{m-1}V^*\otimes\extp^3V^*\otimes \frak{h}},
  \end{split}
 \end{equation}
 where the maps above are given by the composition of the natural maps
 \[
  \begin{tikzcd}
   \Sym^mV^*\otimes\extp^2V^*\otimes\frak{h}\arrow[r, hook]&\Sym^mV^*\otimes\extp^2V^*\otimes V^*\otimes V\arrow[r]&\Sym^mV^*\otimes\extp^3V^*\otimes V,
  \end{tikzcd}
  \]
  and
 \[
  \begin{tikzcd}
   \Sym^mV^*\otimes\extp^2V^*\otimes\frak{h}\arrow[r]&\Sym^{m-1}V^*\otimes V^*\otimes\extp^2V^*\otimes\frak{h}\arrow[r]&\Sym^{m-1}V^*\otimes\extp^3V^*\otimes\frak{h}.
  \end{tikzcd}
\]
\end{definition}

For $m=0,1$ one has the explicit descriptions
\[
 K(\frak{h})\coloneqq K^{(0)}(\frak{h})=\qty{\left.R\in\extp^2V^*\otimes\frak{h}\;\right\vert\;R(x,y)z+R(y,z)x+R(z,x)y=0,\,x,y,z\in V},
\]
\[
 K^1(\frak{h})\coloneqq K^{(1)}(\frak{h})=\qty{\left.\phi\in V^*\otimes K(\frak{h})\;\right\vert\;\phi(x)(y,z)+\phi(y)(z,x)+\phi(z)(x,y)=0,\,x,y,z\in V}.
\]

We also notice, that for $m\in\NN$,
\[
 K^{(m)}(\frak{h})=\qty(\Sym^mV^*\otimes K(\frak{h}))\cap\qty(\Sym^{m-1}V^*\otimes K^1(\frak{h})),
\]
where the intersection is to be understood as intersection of subspaces of $\Sym^{m-1}V^*\otimes V^*\otimes K(\frak{h})$.

Let us now define the tensor bundles
\begin{align*}
 K(M)&\coloneqq\bigsqcup_{x\in M}K(\End(T_xM))\subset\extp^2T^*M\otimes\End(TM),\\
 K^{(m)}(M)&\coloneqq\bigsqcup_{x\in M}K^{(m)}(\End(T_xM))\subset \Sym^mT^*M\otimes K(M).
\end{align*}

The Bianchi identities the curvature tensor $R$ of a torsion-free connection satisfies are thus equivalent to 
\begin{align*}
 R&\in\Gamma(K(M)),\\
 \nabla R&\in\Gamma(K^1(M)).
\end{align*}

\subsection{Holonomy Groups and Holonomy Algebras}\label{Subsec:HolonomyPrelim}

By a \emph{path} we mean a piecewise smooth curve $\gamma\colon[0,1]\longto M$. A \emph{loop} is a closed path.\\ $P_\gamma\colon T_{\gamma(0)}M\longto T_{\gamma(1)}M$ denotes the parallel translation map with respect to $\nabla$.

For $x\in M$ we define its \emph{holonomy group} based at $x$ as the subgroup
\[
 \Hol_x(\nabla)\coloneqq\qty{P_\gamma\;\vert\;\text{$\gamma$ is a loop based at $x$}}\subset\GL(T_xM)
\]

Since the manifold $M$ is assumed connected, notice that $\Hol_x(\nabla)$ is independent of the base point. Indeed, for any two points $x,y\in M$, 
\[
 \Hol_y(\nabla)=P_\gamma\Hol_x(\nabla)P_\gamma^{-1},
\]
for any path connecting $x$ with $y$.

Now, because the manifold $M$ is also assumed simply-connected, it holds \cite[Chapter~2, Proposition~ 2.2.4]{Joyce2007}, that $\Hol_x(\nabla)$ is in fact a connected Lie subgroup of $\GL(T_xM)$.

On the other hand, the parallel translation map concretely determines the horizontal paths in $F(M)$. Explicitly, it holds that a path $\overline{\gamma}\colon[0,1]\longto F(M)$ satisfies
\begin{equation}\label{Eq:HorizontalCurvesInFrameBundle}
 \overline{\gamma}'\in\ker\omega^\nabla\text{ if, and only if, }\overline{\gamma}(t)=P_{0,t}\circ\overline{\gamma}(0),
\end{equation}
where $P_{0,t}=P_{\gamma\vert_{[0,t]}}$, and $\gamma=\pi\circ\overline{\gamma}$.

For $u\in F(M)_x$ we define the \emph{holonomy group} of the connection form $\omega^\nabla$ as the subgroup of $G$ given by
\[
 \Hol_u(\omega^\nabla)\coloneqq\qty{u^{-1}\circ P_\gamma\circ u\;\vert\;\text{$\gamma$ is a loop based at $x$}}\subset G.
\]

Because of the fact that for every $x\in M$, $u\in F(M)_x$,
\[
 \Hol_x(\nabla)=u\Hol_u(\omega^\nabla)u^{-1},
\]
both notions of holonomy group are equivalent in this setting. Their respective Lie algebras are denoted by $\hol_x(\nabla)$, $\hol_u(\omega^\nabla)$.

The Reduction Theorem \cite[Chapter~2, Theorem~7.1]{kobayashi1963foundations} implies that for every $u\in F(M)_x$, $(P^\nabla(u),\pi\vert_{P^\nabla(u)},M)$ is a principal $\Hol_u(\omega^\nabla)$-subbundle of $F(M)$, where
\[
 P^\nabla(u)\coloneqq\qty{P_\gamma\circ u\;\vert\;\gamma\text{ is a path starting at }x}
\]
and the connection form reduces to $P^\nabla(u)$ in the sense that for every $q\in P^\nabla(u)$, 
\begin{equation}\label{Eq:ReductionOfOmegaNabla}
 \ker\omega^\nabla_q\subset T_qP^\nabla(u)
\end{equation}

The bundle $(P^\nabla(u),\pi\vert_{P^\nabla(u)},M;\Hol_u(\omega^\nabla))$ is called the \emph{holonomy bundle} of $\nabla$ through $u$.

One of the most important properties of the holonomy bundle of a connection is the fact that it is the smallest posible reduction of a principal bundle, in the sense that any reduction $(Q,\omega^Q)$ of $(F(M),\omega^\nabla)$ \cite[Chapter~4, Satz~4.1]{baum2009eichfeldtheorie} is in turn reducible to the holonomy bundle. Concretely it holds:
\begin{proposition}\label{Prop:SmallestReduction}
 Let $Q\subset F(M)$ be an immersed submanifold, let $(Q,\omega^Q)$ be a reduction of $(F(M),\omega^\nabla)$, and let $P^\nabla(u)$ denote the holonomy bundle of $\nabla$ through $u$. It holds:
 \begin{itemize}[noitemsep]
  \item[i)] For all $u\in Q$, $P^\nabla(u)\subset Q$,\\
  \item[ii)] $\omega^Q\vert_{TP^\nabla(u)}=\omega^\nabla\vert_{TP^\nabla(u)}$.
 \end{itemize}
\end{proposition}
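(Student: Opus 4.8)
The plan is to reduce both assertions to a single fact: an $\omega^\nabla$-horizontal lift of a path that starts at a point of $Q$ stays inside $Q$. First I would record the defining property of a reduction: writing $\iota\colon Q\hookrightarrow F(M)$ for the inclusion, $(Q,\omega^Q)$ being a reduction of $(F(M),\omega^\nabla)$ means precisely that $\omega^\nabla$ restricted to $TQ$ takes values in the Lie algebra $\frh$ of the structure group of $Q$ and that $\omega^Q=\iota^*\omega^\nabla$ under $\frh\hookrightarrow\frg$; equivalently, $\ker\omega^\nabla_q\subset T_qQ$ for all $q\in Q$, which is the exact analogue of \eqref{Eq:ReductionOfOmegaNabla}. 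In particular, a piecewise smooth curve contained in $Q$ is $\omega^Q$-horizontal if and only if it is $\omega^\nabla$-horizontal when regarded as a curve in $F(M)$.

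For part (i), I would fix $u\in Q$, put $x=\pi(u)$, and let $\gamma\colon[0,1]\longto M$ be a path with $\gamma(0)=x$. Since $\omega^Q$ is a principal connection on $Q$, the path $\gamma$ admits a unique $\omega^Q$-horizontal lift $\overline{\gamma}^{\,Q}\colon[0,1]\longto Q$ with $\overline{\gamma}^{\,Q}(0)=u$; horizontal lifts of curves in a principal bundle are defined on the whole parameter interval, and the finitely many non-smooth points of $\gamma$ are dealt with by concatenation. Viewed inside $F(M)$, the curve $\overline{\gamma}^{\,Q}$ is $\omega^\nabla$-horizontal, starts at $u$, and projects to $\gamma$, so by uniqueness of $\omega^\nabla$-horizontal lifts it coincides with the lift appearing in \eqref{Eq:HorizontalCurvesInFrameBundle}, i.e. $\overline{\gamma}^{\,Q}(t)=P_{0,t}\circ u$. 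Evaluating at $t=1$ gives $P_\gamma\circ u=\overline{\gamma}^{\,Q}(1)\in Q$, and letting $\gamma$ range over all paths issuing from $x$ yields $P^\nabla(u)\subset Q$.

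For part (ii), the computation above shows more: parallel translation along any path, computed in $F(M)$ via $\omega^\nabla$, agrees with parallel translation computed in $Q$ via $\omega^Q$. Hence $\Hol_u(\omega^Q)=\Hol_u(\omega^\nabla)$ and the holonomy bundle of the principal connection $\omega^Q$ through $u$ is exactly $P^\nabla(u)$, now as a subset of $Q$; being a holonomy bundle, it is an immersed (initial) submanifold of $Q$, so the tangent inclusions $TP^\nabla(u)\subset TQ\subset TF(M)$ are legitimate. Restricting the identity $\omega^Q=\iota^*\omega^\nabla$ to $TP^\nabla(u)$ then gives $\omega^Q\vert_{TP^\nabla(u)}=\omega^\nabla\vert_{TP^\nabla(u)}$.

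I expect the only genuinely delicate points to be bookkeeping rather than conceptual ones: keeping track of which immersed submanifold each curve lives in so that the restrictions of forms and the tangent inclusions above actually make sense, and correctly invoking the two standard facts used, namely that principal connections admit global-in-parameter horizontal lifts and that the holonomy bundle of a principal connection is an immersed submanifold (the latter being exactly what makes the statement of (ii) meaningful). Everything else is an immediate consequence of the uniqueness of horizontal lifts in $F(M)$ together with the characterization \eqref{Eq:HorizontalCurvesInFrameBundle}.
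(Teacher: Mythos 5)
Your argument is correct, and it is the standard proof of this fact: the defining property $\omega^Q=\iota^*\omega^\nabla$ (equivalently $\ker\omega^\nabla_q\subset T_qQ$, i.e.\ \eqref{Eq:ReductionOfOmegaNabla}) makes $\omega^Q$-horizontal lifts coincide with $\omega^\nabla$-horizontal lifts, so by uniqueness and \eqref{Eq:HorizontalCurvesInFrameBundle} every $P_\gamma\circ u$ lies in $Q$, giving (i), and (ii) follows by restricting $\omega^Q=\iota^*\omega^\nabla$ to $TP^\nabla(u)\subset TQ$ once one notes, as you do, that $P^\nabla(u)$ is also the holonomy bundle of $\omega^Q$ and hence an immersed submanifold of $Q$. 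The paper does not prove this proposition but imports it from the literature (Baum), and your proof is essentially the argument given there, so there is nothing to object to beyond the bookkeeping points you already flag.
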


Finally, the holonomy bundle allows to determine how big the holonomy group can be. 
\begin{proposition}[Ambrose-Singer holonomy Theorem]\cite[Chapter~4, Satz~4.5]{baum2009eichfeldtheorie}\label{Prop:Ambrose-Singer}
 Let $\omega^\nabla$ be the associated connection form to the connection $\nabla$.
 \begin{itemize}[noitemsep]
  \item[i)] For $u\in F(M)$ it holds:
   \[
    \hol_u(\omega^\nabla)=\gen\qty{F^\nabla_q(X,Y)\;\vert\;q\in P^\nabla(u),\,X,Y\in\ker\omega^\nabla_q}\subset\frg,
   \]
   where $F^\nabla\in\Omega^2(F(M),\frg)$ denotes the curvature form associated to $\omega^\nabla$.\\
   \item[ii)] Let $R^\nabla$ denote the curvature tensor associated to $\nabla$. For $x\in M$ it holds:
   \[
    \hol_x(\nabla)=\gen\qty{(P_\gamma^{-1}\cdot R^\nabla)(v,w)\;\vert\;v,w\in T_xM,\,\text{$\gamma$ is a path starting at $x$}},
   \]
   where $(P_\gamma^{-1}\cdot R^\nabla)(v,w)\coloneqq P_\gamma^{-1}\circ R^\nabla(P_\gamma v,P_\gamma w)\circ P_\gamma$.
 \end{itemize}
\end{proposition}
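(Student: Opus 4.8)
\medskip
\noindent\emph{Proof strategy.} The plan is to prove part~i) directly and to read off part~ii) from it via the frame-bundle dictionary. For the deduction, fix $u\in F(M)_x$ and recall $\hol_x(\nabla)=u\,\hol_u(\omega^\nabla)\,u^{-1}$. Every $q\in P^\nabla(u)$ has the form $q=P_\gamma\circ u$ for a path $\gamma$ from $x$ to $y=\pi(q)$, and $\dd\pi$ identifies $\ker\omega^\nabla_q$ with $T_yM$. Feeding such a $q$ and horizontal vectors $X,Y$ into \eqref{Eq:R=AduF} with $k=0$ gives $F^\nabla_q(X,Y)=q^{-1}\circ R^\nabla_y(\dd\pi X,\dd\pi Y)\circ q$; since $u\,q^{-1}=P_\gamma^{-1}$ and $q\,u^{-1}=P_\gamma$, conjugating by $u$ and writing $\dd\pi X=P_\gamma v$, $\dd\pi Y=P_\gamma w$ turns $u\,F^\nabla_q(X,Y)\,u^{-1}$ into $(P_\gamma^{-1}\cdot R^\nabla)(v,w)$. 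As $q$ ranges over $P^\nabla(u)$ and $X,Y$ over $\ker\omega^\nabla_q$, the data $(\gamma,v,w)$ range over all paths from $x$ and all $v,w\in T_xM$, so ii) is precisely the $u$-conjugate of i).

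For part~i), set $\frg':=\gen\qty{F^\nabla_q(X,Y)\;\vert\;q\in P^\nabla(u),\,X,Y\in\ker\omega^\nabla_q}$. The inclusion $\frg'\subseteq\hol_u(\omega^\nabla)$ follows from the Reduction Theorem: $\omega^\nabla$ restricts to a connection form on the holonomy bundle $P^\nabla(u)$ with structure group $\Hol_u(\omega^\nabla)$, its horizontal spaces are the $\ker\omega^\nabla_q$ by \eqref{Eq:ReductionOfOmegaNabla}, hence its curvature $F^\nabla|_{TP^\nabla(u)}$ is $\hol_u(\omega^\nabla)$-valued and each generator $F^\nabla_q(X,Y)$ of $\frg'$ lies in $\hol_u(\omega^\nabla)$. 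For the reverse inclusion I would introduce on $P:=P^\nabla(u)$ the distribution $\mathcal{D}$ with $\mathcal{D}_q:=\ker\omega^\nabla_q\oplus\qty{\widetilde{A}_q\;\vert\;A\in\frg'}$, of constant rank $\dim M+\dim\frg'$, and show it is involutive. On the natural generators --- horizontal lifts $\overline{X}$ and fundamental fields $\widetilde{A}$, $A\in\frg'$ --- one has $\omega^\nabla([\overline{X},\overline{Y}])=-F^\nabla(\overline{X},\overline{Y})\in\frg'$, so $[\overline{X},\overline{Y}]\in\mathcal{D}$; the bracket $[\overline{X},\widetilde{A}]$ is horizontal because $\ker\omega^\nabla$ is $R_{\exp(tA)}$-invariant and $\exp(tA)\in\Hol_u(\omega^\nabla)$; and $[\widetilde{A},\widetilde{B}]=\widetilde{[A,B]}$.

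The main obstacle is this last bracket, which lies in $\mathcal{D}$ only if $[A,B]\in\frg'$, i.e. only if $\frg'$ is a Lie subalgebra. I would get this from $\Ad(\Hol_u(\omega^\nabla))$-invariance of $\frg'$: the equivariance $R_h^*F^\nabla=\Ad(h^{-1})\circ F^\nabla$ gives, for $h\in\Hol_u(\omega^\nabla)$, that $\Ad(h^{-1})F^\nabla_q(X,Y)=F^\nabla_{qh}((R_h)_*X,(R_h)_*Y)$ is again a generator of $\frg'$; differentiating in $h$ shows $[\hol_u(\omega^\nabla),\frg']\subseteq\frg'$, and since $\frg'\subseteq\hol_u(\omega^\nabla)$ this forces $[\frg',\frg']\subseteq\frg'$. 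Granting involutivity, let $N$ be the maximal integral manifold of $\mathcal{D}$ through $u$: because $\mathcal{D}$ contains the horizontal distribution and is invariant under the connected subgroup $H'\subseteq\Hol_u(\omega^\nabla)$ integrating $\frg'$, one checks that $(N,\omega^\nabla|_{TN})$ is a reduction of $(F(M),\omega^\nabla)$, while $N\subseteq P$ by construction. Proposition~\ref{Prop:SmallestReduction} then yields $P^\nabla(u)\subseteq N$, hence $N=P$, and comparing dimensions $\dim M+\dim\frg'=\dim N=\dim P^\nabla(u)=\dim M+\dim\hol_u(\omega^\nabla)$, so $\dim\frg'=\dim\hol_u(\omega^\nabla)$; together with $\frg'\subseteq\hol_u(\omega^\nabla)$ this gives $\frg'=\hol_u(\omega^\nabla)$, which is i), and ii) follows by the first paragraph.
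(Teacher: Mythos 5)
First, note that the paper does not prove this proposition at all: it is quoted from \cite[Chapter~4, Satz~4.5]{baum2009eichfeldtheorie}, so your write-up can only be measured against the classical argument (Kobayashi--Nomizu/Baum), which is indeed the route you follow. Your deduction of ii) from i) via \eqref{Eq:R=AduF}, the inclusion $\frg'\subseteq\hol_u(\omega^\nabla)$ via the Reduction Theorem and \eqref{Eq:ReductionOfOmegaNabla}, the $\Ad\qty(\Hol_u(\omega^\nabla))$-invariance of $\frg'$ giving $[\frg',\frg']\subseteq\frg'$, and the involutivity of $\mathcal{D}$ are all correct and are exactly the standard steps.

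The gap is in the endgame. You finish by asserting that ``one checks that $(N,\omega^\nabla|_{TN})$ is a reduction of $(F(M),\omega^\nabla)$'' in order to invoke Proposition \ref{Prop:SmallestReduction}. Being a reduction requires $\pi|_N\colon N\longto M$ to be a principal $H'$-subbundle, and the delicate point is that each fibre $N\cap\pi^{-1}(y)$ must be a \emph{single} $H'$-orbit. What your listed ingredients (horizontal distribution contained in $\mathcal{D}$, $H'$-invariance of $N$) actually give is only that each fibre is a disjoint union of open $H'$-orbits; if you try to prove connectedness of the fibre by joining two points $q,q'\in N\cap\pi^{-1}(y)$ by a path $c$ in $N$ and splitting $c(t)=\tilde c(t)a(t)$ into the horizontal lift of $\pi\circ c$ and a curve $a(t)\in H'$, you only obtain $q'\in q\,\Hol_q(\omega^\nabla)\,H'$ --- i.e.\ the fibre statement is essentially equivalent to the hard inclusion $\Hol_u(\omega^\nabla)\subseteq H'$ you are trying to prove. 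So, as written, the crucial direction is hidden inside ``one checks.'' Fortunately the detour is unnecessary and the fix uses only what you already have: by definition every point of $P^\nabla(u)$ is $P_\gamma\circ u$, i.e.\ the endpoint of the horizontal lift of $\gamma$ starting at $u$ (see \eqref{Eq:HorizontalCurvesInFrameBundle}); horizontal curves are tangent to $\mathcal{D}$, hence stay in the leaf $N$ through $u$, so $P^\nabla(u)\subseteq N$ directly. Since the inclusion $P^\nabla(u)\hookrightarrow F(M)$ factors smoothly through the leaf $N$, this gives $\dim M+\dim\hol_u(\omega^\nabla)\leq\dim N=\dim M+\dim\frg'$, which together with $\frg'\subseteq\hol_u(\omega^\nabla)$ yields $\frg'=\hol_u(\omega^\nabla)$ --- no reduction property of $N$ and no appeal to Proposition \ref{Prop:SmallestReduction} are needed.
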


An immediate consequence of the Ambrose-Singer Theorem is the fact, that for the torsion-free connection $\nabla$, its curvature tensor satisfies for every $x\in M$:
\begin{align*}
 R^\nabla_x\in& K(\hol_x(\nabla)),\\
 (\nabla R^\nabla)_x\in&K^1(\hol_x(\nabla)).
\end{align*}

\subsection{The Exponential Framing}\label{Subsec:ExpFraming}
For $p\in M$ let $(\mathcal{U},(x^i))$ be a normal coordinate system centered at $p$, that is, there exists an open subset $\mathcal{V}\subset T_pM$ star-shaped around $0\in T_pM$ such that $\exp_p\colon\mathcal{V}\longto \mathcal{U}=\exp_p(\mathcal{V})$ is a diffeomorphism. For $v\in\mathcal{V}$ we denote by $\gamma_v\colon[0,1]\longto M$ the radial geodesic at $p$, i.e. $\gamma_v(t)=\exp_p(tv)$.

Normal coordinates allow us to define a special local frame:

\begin{proposition}\label{Prop:SmoothnessOfExpFraming}
 Let $\nabla$ be a connection on $TM$, and let $(\mathcal{U},(x^i))$ be a normal coordinate system centered at $p\in M$. Let $(\partial_1,\ldots,\partial_n)\colon\mathcal{U}\longto F(M)$ be the associated local frame. Set
 \begin{align*}
  \sigma\colon\mathcal{U}\longto& F(M)\\
  q\longsto&(\sigma_1(q),\ldots,\sigma_n(q))\coloneqq(P_{\gamma_v}e_1,\ldots,P_{\gamma_v}e_n),
 \end{align*}
 where $q=\gamma_v(1)$, $e_i\coloneqq\partial_i\vert_p$, and $P_{\gamma_v}\colon T_pM\longto T_qM$ denotes the parallel translation map. It holds that $\sigma\in\Gamma_{\mathcal{U}}(F(M))$. 
\end{proposition}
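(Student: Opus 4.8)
The plan is to realize $\sigma$ as a composition of the normal‑coordinate chart map with the time‑one flow of a single smooth vector field, so that its smoothness reduces to the smooth dependence of solutions of ordinary differential equations on their initial conditions. Concretely, for $\xi\in V$ let $B(\xi)\in\frak{X}(F(M))$ be the standard horizontal vector field, i.e.\ the unique vector field with $\omega^\nabla(B(\xi))\equiv 0$ and $\theta^\nabla(B(\xi))\equiv\xi$, equivalently the one whose value at $u$ is the horizontal lift of $u(\xi)\in T_{\pi(u)}M$. Since $(u,\xi)\longmapsto B(\xi)_u$ is smooth and linear in $\xi$, the assignment $\widehat B_{(u,\xi)}\coloneqq(B(\xi)_u,0)$ defines a smooth vector field $\widehat B$ on $F(M)\times V$ whose local flow $\widehat\Phi$ is smooth on its open domain; fixing $u_0\coloneqq(e_1,\dots,e_n)\in F(M)_p$ I would set
\[
 \widetilde E(\xi)\coloneqq\pr_{F(M)}\bigl(\widehat\Phi(1,(u_0,\xi))\bigr),\qquad \xi\in\mathcal V'\coloneqq u_0^{-1}(\mathcal V),
\]
and show that $\widetilde E$ is well defined and smooth on $\mathcal V'$ with $\widetilde E(\xi)=P_{\gamma_{u_0\xi}}\circ u_0$.

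The core of the argument is precisely this identification. Consider the curve $\overline\gamma^\xi(t)\coloneqq P_{0,t}\circ u_0$, the parallel transport of $u_0$ along $\gamma\coloneqq\gamma_{u_0\xi}$, which is defined for all $t\in[0,1]$ because $u_0\xi\in\mathcal V$ and $\mathcal V$ is star‑shaped with $\exp_p$ defined on it. By \eqref{Eq:HorizontalCurvesInFrameBundle} the curve $\overline\gamma^\xi$ is horizontal, and using the geodesic equation in the form $\dot\gamma(t)=P_{0,t}(\dot\gamma(0))=P_{0,t}(u_0\xi)$ one computes $\theta^\nabla_{\overline\gamma^\xi(t)}(\dot{\overline\gamma}^\xi(t))=\overline\gamma^\xi(t)^{-1}(\dot\gamma(t))=\xi$. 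Since $(\omega^\nabla_u,\theta^\nabla_u)\colon T_uF(M)\longto\frg\oplus V$ is a linear isomorphism, these two facts force $\dot{\overline\gamma}^\xi(t)=B(\xi)_{\overline\gamma^\xi(t)}$, i.e.\ $\overline\gamma^\xi$ is the integral curve of $B(\xi)$ through $u_0$. Hence that integral curve exists on all of $[0,1]$, the triple $(1,(u_0,\xi))$ lies in the domain of $\widehat\Phi$ for every $\xi\in\mathcal V'$, and $\widetilde E(\xi)=\overline\gamma^\xi(1)=P_{\gamma_{u_0\xi}}\circ u_0$; smoothness of $\widetilde E$ then follows from that of $\widehat\Phi$. (This is the classical fact that integral curves of the standard horizontal vector fields project onto geodesics; see e.g.\ \cite{kobayashi1963foundations}.)

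Finally I would assemble $\sigma$. Let $\epsilon_1,\dots,\epsilon_n$ be the standard basis of $V=\RR^n$, so that $u_0\epsilon_i=e_i$, and let $\kappa\colon\mathcal U\longto\mathcal V'$, $\kappa(q)\coloneqq u_0^{-1}(\exp_p^{-1}(q))$; this map is smooth because $\exp_p\colon\mathcal V\longto\mathcal U$ is a diffeomorphism and $u_0$ a linear isomorphism (in the coordinates $(x^i)$ it is just $q\longmapsto(x^1(q),\dots,x^n(q))$). Writing $q=\gamma_v(1)$ with $v=\exp_p^{-1}(q)$, so that $u_0\kappa(q)=v$, one obtains $\widetilde E(\kappa(q))=P_{\gamma_v}\circ u_0$, whose $i$‑th frame vector is $P_{\gamma_v}(u_0\epsilon_i)=P_{\gamma_v}(e_i)=\sigma_i(q)$; thus $\sigma=\widetilde E\circ\kappa$ on $\mathcal U$. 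As a composition of smooth maps $\sigma$ is smooth, $\pi\circ\sigma=\Id_{\mathcal U}$ since $\pi(\widetilde E(\kappa(q)))=\exp_p(u_0\kappa(q))=\exp_p(v)=q$, and each $\sigma(q)=P_{\gamma_v}\circ u_0$ is a linear isomorphism $V\to T_qM$; hence $\sigma\in\Gamma_{\mathcal U}(F(M))$.

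I expect the only genuinely non‑formal point to be the identification in the second paragraph — recognizing the integral curves of the standard horizontal vector fields as the parallel‑transported frames along the radial geodesics, together with checking that these are defined on the whole interval $[0,1]$ over the star‑shaped domain $\mathcal V$; the remaining steps are just smooth dependence of flows on initial data and the diffeomorphism property of $\exp_p$, and one could alternatively quote that identification directly from Kobayashi--Nomizu.
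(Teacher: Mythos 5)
Your proof is correct, but it takes a genuinely different route from the paper. The paper argues entirely in a local frame: it writes the parallel-transport coefficients $a_i^k$ along the radial curves as solutions of the linear system $\dv{}{t}a_i^k(tv)+\widetilde{\Gamma}^k_{j\ell}(tv)v^ja_i^\ell(tv)=0$, passes to the parametrized problem in the two variables $(t,v)$, invokes smooth dependence of ODE solutions on parameters to get a smooth $\hat a_i^k(t,v)$, and uses a rescaling argument ($\hat a_i^k(t/c,cv)=\hat a_i^k(t,v)$) to identify $a_i^k(v)=\hat a_i^k(1,v)$. You instead work invariantly on $F(M)$: you realize the parallel-transported frames along radial geodesics as the integral curves of the standard horizontal vector fields $B(\xi)$ through $u_0$ (using that $(\omega^\nabla_u,\theta^\nabla_u)$ is an isomorphism and that the geodesic velocity is parallel), package the $\xi$-dependence into the single smooth field $\widehat B$ on $F(M)\times V$, and obtain $\sigma=\widetilde E\circ\kappa$ from smoothness of the flow $\widehat\Phi$ and of the normal-coordinate chart. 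The two arguments rest on the same analytic core --- smooth dependence of solutions of (linear) ODEs on initial data/parameters --- but yours buys coordinate-freeness and a direct link to the classical Kobayashi--Nomizu fact that integral curves of standard horizontal fields project to geodesics (which also settles existence on all of $[0,1]$ over the star-shaped domain), at the cost of setting up the frame-bundle machinery; the paper's version is more elementary and self-contained, needing only Christoffel symbols and a parametrized linear ODE, with the scaling trick replacing your identification step. The only points you should make explicit if you write this up are the (standard) smoothness in $t$ of $t\longsto P_{0,t}\circ u_0$, needed before you may differentiate it and compare with $B(\xi)$, and the smoothness and fiberwise linearity of $(u,\xi)\longsto B(\xi)_u$; both are routine.
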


\begin{proof}
Let $(f_1,\ldots,f_n)\in\Gamma_{\mathcal{U}}(F(M))$ be any smooth local section with $f_i(p)=e_i$. Denote by $\widetilde{\Gamma}_{ij}^k$ the Christoffel symbols of $\nabla$ associated to this local frame.

 Because $(\mathcal{U},(x^i))$ is a normal coordinate system around $p$, we can write $\gamma_v(t)=tv$ for all $v\in\mathcal{V}$, $t\in[0,1]$. Set $\sigma_i(v)=a_i^k(v)f_k(v)$.
 
 Since the curve $t\longsto P_{\gamma_{tv}}e_i=\sigma_i(tv)$ is $\gamma_v$-parallel, we obtain, the coefficients $a_i^k$ are a solution for the initial value problem
 \[
  \begin{cases}
   \dv{}{t}a_i^k(tv)+\widetilde{\Gamma}_{j\ell}^k(tv)v^ja_i^\ell(tv)=0,\\
   a_i^k(0)=\delta_i^k
  \end{cases}
 \]
 
 With this we observe that the functions $\widetilde{a}_i^k(t,v)\coloneqq a_i^k(tv)$ are a solution to the initial value problem
 \[
  \begin{cases}
   \pdv{}{t}\widetilde{a}_i^k(t,v)+\widetilde{\Gamma}_{j\ell}^k(tv)v^j\widetilde{a}_i^\ell(t,v)=0,\\
   \widetilde{a}_i^k(0,v)=\delta_i^k
  \end{cases}
 \] 
 which admits a unique smooth solution $\hat{a}_i^k$, for every $1\leq i,k\leq n$. 
 
One readily observes that for every $c>0$, the maps $(t,v)\longsto\hat{a}_i^k(t/c,cv)$ are also smooth solutions for the above initial value problem, which thus implies $\hat{a}_i^k(t/c,cv)=\hat{a}_i^k(t,v)$ and so we obtain $a_i^k(v)=\hat{a}_i^k(1,v)$, which yields the smoothness of each of the maps $\sigma_i\colon\mathcal{U}\longto TM$.
\end{proof}

\begin{definition}\label{Prop:ExponentialFraming}
 An \emph{exponential framing} at $p$ is the local frame $\sigma\in\Gamma_{\mathcal{U}}(F(M))$ introduced in Proposition \ref{Prop:SmoothnessOfExpFraming}, where $\mathcal{U}$ is the domain of a normal coordinate system centered at $p$.
\end{definition} 

In terms of the connection form $\omega^\nabla\in\Omega^1(F(M),\frg)$ induced by the connection $\nabla$, the exponential framing $\sigma$ takes the form
\begin{equation}\label{Eq:ExpFramingShort}
 \sigma(\gamma_v(t))=P_{\gamma_{tv}}\circ u_0\eqqcolon\overline{\gamma}_{u_0}(t),
\end{equation}
where $u_0\in F(M)_p$ denotes the frame which maps the $i$-th standard basis vector of $V$ to the basis vector $e_i$ of $T_pM$. In other words, for all $v\in\mathcal{V}$, $t\in[0,1]$, $\sigma(\gamma_v(t))$ lies in $P^\nabla(u_0)$, the holonomy bundle of $\nabla$ through $u_0$, and because of the fact that $\exp_p\colon\mathcal{V}\longto\mathcal{U}$ is a diffeomorphism, we obtain in sum:
\begin{equation}\label{Eq:ExpFramingInHolBundle}
 \sigma(\mathcal{U})\subset P^\nabla(u_0).
\end{equation}

We finish up by making a final observation derived from the fact that the exponential framing takes values in the holonomy bundle of the connection. Let $\evf$ denote the Euler vector field of the manifold. That is, the vector field associated to the flow $\phi_t\colon V\longto V$, with $\phi_t(v)=e^tv$. Notice that for any $q=\gamma_v(1)\in\mathcal{U}$, 
\begin{equation}\label{Eq:OmegaHatXiVanishes}
\hat{\omega}(\evf_q)\coloneqq\sigma^*\omega^\nabla(\evf_q)=0.
\end{equation}

Indeed, from \eqref{Eq:ExpFramingShort} we obtain
 \[
  \hat{\omega}(\evf_{q})=\omega^\nabla(\dd_q\sigma\evf_q)=
  \omega^\nabla(\dd\sigma(\gamma_v'(1)))=
  \omega^\nabla(\overline{\gamma}_{u_0}'(1))\overset{\scriptscriptstyle{\eqref{Eq:HorizontalCurvesInFrameBundle}}}{=}0.
 \]
 
 \section{Necessary Conditions for the Existence of Torsion-free Connections}\label{Sec:NecConditions}
 
 In this section we analyze in some depth a variety of the properties enjoyed by the curvature tensor of a torsion-free connection.
 
 \subsection{The Principal Bundle Picture}\label{Subsec:PrincBundlePicture}
 For $m\in\NN_0$, let $\rho_m\colon G\longto\GL(\bigotimes^mV^*\otimes\frak{g})$ denote the representation given by
 \begin{equation}\label{Eq:RepRhom}
  \rho_m(T)(\alpha_1\otimes\cdots\otimes\alpha_m\otimes X)\coloneqq\alpha_1\circ T^{-1}\otimes\cdots\otimes\alpha_m\circ T^{-1}\otimes\Ad(T)X.
 \end{equation}
 
 It is easy to see that for $m\in\NN_0$, $\rho_{m+2}$ restricts to a representation of the space $K^{(m)}(\frak{g})$.
 
 This representation allows us to establish the vector bundle isomorphism
 \[
  K^{(m)}(M)\simeq F(M)\times_{\rho_{m+2}} K^{(m)}(\frak{g})\hspace{1cm}m\in\NN_0,
 \]
 where the right-hand side denotes the associated vector bundle of $F(M)$ with typical fiber $K^{(m)}(\frak{g})$. That is, the vector bundle with total space $F(M)\times K^{(m)}(\frak{g})/\sim$, and the equivalence relation $\sim$ is given by $(v,\phi)\sim(v\circ T,\rho_m(T^{-1})\phi)$, for all $T\in G$, $v\in F(M)$, $\phi\in K^{(m)}(\frak{g})$. This isomorphism thus enables us to establish the one-to-one correspondence 
 \begin{equation}\label{Eq:SectionsOfKmCorrespondence}
  \Gamma(K^{(m)}(M))\equalhat C^\infty(F(M),K^{(m)}(\frak{g}))^{G},
 \end{equation}
 the set of smooth $G$-equivariant maps of type $\rho_{m+2}$ from $F(M)$ to $K^{(m)}(\frak{g})$.
   
 Similarly, the standard and adjoint representations of $G=\GL(V)$ induce the vector bundle isomorphisms
 \begin{align*}
  TM&\simeq F(M)\times_{\underline{\rho}}V,\\
  \End(TM)&\simeq F(M)\times_{\mathrm{Ad}}\frak{g}\eqqcolon\Ad(F(M)).
 \end{align*}

In terms of the curvature tensor, notice that the curvature form $F^\nabla$ satisfies the identity:
\begin{equation}\label{Eq:F-RThetaTheta}
 F^\nabla=R(\theta^\nabla,\theta^\nabla),
\end{equation}
where $R\in C^\infty(F(M),K(\frak{g}))^G$ denotes the unique $G$-equivariant map associated to the curvature tensor $R^\nabla$ given by the correspondence \eqref{Eq:SectionsOfKmCorrespondence}.

Given the fact the results in this work are local, we can restrict ourselves, without loss of generality, to the case in which $M=U$, where $U$ denotes an open subset of $V$, which is also star-shaped around 0 such that the exponential map $\exp_0\colon U\subset V\cong T_0U\longto U$ associated to a (non-trivial) torsion-free connection $\nabla$ on $TU=U\times V$ is the identity. By choosing a normal coordinate system around a fixed $p$ on the manifold $M$ we can always reduce to this case. In this setting notice the vector bundle isomorphisms
 \begin{align*}
  F(U)&\simeq U\times G,\\
  K^{(m)}(U)&\simeq U\times K^{(m)}(\frak{g}).
 \end{align*} 
 
 The vector bundle isomorphism $F(U)\times_{\rho_{m+2}} K^{(m)}(\frak{g})\simeq K^{(m)}(U)$ is then explicitly given by
 \begin{align*}
  F(U)\times_{\rho_{m+2}} K^{(m)}(\frak{g})&\longto K^{(m)}(U),\\
  \qty[(v,g),\phi]&\longsto(v,\rho_{m+2}(g)\phi)
 \end{align*}
 
 By means of this isomorphism we plainly write $K^{(m)}(U)=F(U)\times_{\rho_{m+2}} K^{(m)}(\frak{g})$.
 
 The standard trivialization of the frame bundle is given by the local section
 \begin{align*}
  \ve\colon U&\longto U\times G,\\
  v&\longsto(v,\Id_V)
 \end{align*}
 
 We also notice that for any section $\phi\in\Gamma(K^{(m)}(U))$ there exists a unique smooth map \\$\Phi\colon U\longto K^{(m)}(\frak{g})$ such that
 \[
  \phi(v)=[\ve(v),\Phi(v)].
 \]
 
 Because of Proposition \ref{Prop:SmoothnessOfExpFraming}, we obtain the smooth map $h\colon U\longto G$ given by $h(v)=P_{\gamma_v}$. In this case the exponential framing $\sigma$ takes the form
 \[
  \sigma(v)=(v,h(v)).
 \]
 
 From the fact that $\sigma(v)=\ve(v)\cdot h(v)$ we thus obtain 
 \[
  \phi(v)=[\sigma(v),\rho_{m+2}(h(v)^{-1})\Phi(v)]\eqqcolon[\sigma(v),\qty(\rho_{m+2}(h^{-1})\Phi)(v)].
 \]
 
Furthermore, $\sigma=\varepsilon\cdot h$ implies the standard change of gauge relations
\begin{eqnarray}
 \hat{\omega}&=&\Ad(h^{-1})\circ\varepsilon^*\omega^\nabla+h^*\mu_G\eqqcolon\Ad(h^{-1})\circ\Gamma+h^*\mu_G,\\
 \hat{F}&\coloneqq&\sigma^*F=\Ad(h^{-1})\circ\varepsilon^* F\eqqcolon\Ad(h^{-1})\circ \underline{F},\\
 \hat{\theta}&\coloneqq&\sigma^*\theta=h^{-1}\varepsilon^*\theta\eqqcolon h^{-1}\underline{\theta},\label{Eq:hGauge}
\end{eqnarray}
where $\mu_G$ denotes the left-invariant Maurer-Cartan form of $G$.

We observe that for $v\in U$, $w\in V=T_vU$,
\[
 \underline{\theta}_v(w)=w.
\]

In other words, it holds
\[
 \underline{\theta}=\dd v,
\]
where $v$ denotes the generic point in $U$. That is, $v\colon U\longto U$ is the identity map.

Now, by pulling back $\dd\theta+\omega\wedge\theta=0$ we obtain $\Gamma\wedge\dd v=0$. Which amounts to the fact that $\Gamma$ actually takes values in $\Sym^2V^*\otimes V$.

Moreover, by pulling back the first Bianchi identity we also obtain
\[
 \underline{F}\wedge\dd v=0,
\]
which thus implies that $\underline{F}$ takes values in $K(\frak{g})$.

Thus, in this context we obtain that the relation \eqref{Eq:F-RThetaTheta} in these trivializations reads:
\begin{align}
 \underline{F}=&R^\nabla,\\
 \hat{F}=&\rho_2(h^{-1})R(\hat{\theta},\hat{\theta})\eqqcolon\hat{R}(\hat{\theta},\hat{\theta})\label{Eq:Pulled-back F-RThetaTheta}
\end{align}

Notice as well that along the standard trivialization $\ve\colon U\longto U\times G$ given by $\ve(q)=(q,\Id_V)$, we readily obtain the relation
 \begin{equation}\label{Eq:CovariantDerR}
  \nabla R^\nabla=\dd R^\nabla+(\rho_2)_*(\Gamma)\wedge R^\nabla.
 \end{equation}
 
 A relevant observation is the fact that under general gauges, a similar structural equation is satisfied:
 \begin{proposition}\label{Prop:CurvatureAndGauges}
  Let $U\subset V$ be an open set containing $0$, let $g\colon U\longto G$ be a smooth map, and denote by $s$ the associated local section $s\colon U\longto F(U)=U\times G$ given by $s(q)\coloneqq(q,g(q))$. Then $\overline{R}\coloneqq\rho_2(g^{-1})R^\nabla$, $\overline{\omega}\coloneqq s^*\omega^\nabla$ satisfy the structure equation
  \begin{equation}\label{Eq:CurvAndGauges}
   (\dd\overline{R}+(\rho_2)_*(\overline{\omega})\wedge\overline{R})(g)=\rho_3(g^{-1})\nabla R^\nabla,
  \end{equation}
  
  where $R^\nabla$ denotes the curvature tensor of the torsion-free connection $\nabla$, and $\omega^\nabla\in\Omega^1(F(U),\frak{g})$ denotes its associated connection form. In particular, the map $(\dd\overline{R}+(\rho_2)_*(\overline{\omega})\wedge\overline{R})(g)$ takes values in $K^1(\frak{g})$.
 \end{proposition}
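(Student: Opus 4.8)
The plan is to read \eqref{Eq:CurvAndGauges} as the gauge--covariance of the structure equation \eqref{Eq:CovariantDerR}: the latter computes the covariant derivative of $R^\nabla$ along the standard gauge $\varepsilon$, and \eqref{Eq:CurvAndGauges} is the same statement written in an arbitrary gauge $s=\varepsilon\cdot g$. Two preliminary remarks fix the objects involved. First, $K(M)\subset\extp^2T^*M\otimes\End(TM)$ is a $\nabla$--parallel subbundle, being cut out fibrewise by $\GL(V)$--invariant linear equations; hence $\nabla R^\nabla$, a priori a section of $\extp^2T^*M\otimes\End(TM)$, is the covariant derivative of $R^\nabla$ taken inside $K(M)$, and $\overline R=\rho_2(g^{-1})R^\nabla$ is exactly the $K(\frg)$--valued function representing this section in the gauge $s$. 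Second, since $s$ picks out the frame $g(q)\colon V\to T_qU$ at each $q$, for a $K(\frg)$--valued $1$--form $\eta$ on $U$ the symbol $\eta(g)$ denotes $q\mapsto\big(v\mapsto\eta_q(g(q)v)\big)$, i.e. $\eta$ read off in the coframe dual to $g$; this converts a $1$--form on $U$ into a map $U\to V^*\otimes K(\frg)$, which is the natural home of the gauge--$s$ representative of a section of $K^1(M)$.

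With these conventions the verification is a direct substitution. Starting from \eqref{Eq:CovariantDerR} and inserting $R^\nabla=\rho_2(g)\,\overline R$, the Leibniz rule together with $\dd(\rho_2(g))=(\rho_2)_*\big((\dd g)\,g^{-1}\big)\,\rho_2(g)$ gives $\dd R^\nabla=\rho_2(g)\,\dd\overline R+(\rho_2)_*\big((\dd g)\,g^{-1}\big)R^\nabla$. Rewriting the connection term via the change--of--gauge relation in the form $\Gamma=\Ad(g)\,\overline\omega-(\dd g)\,g^{-1}$ (which is $\overline\omega=\Ad(g^{-1})\circ\Gamma+g^*\mu_G$ rearranged, using $g^*\mu_G=g^{-1}\dd g$) and applying the intertwining identity $(\rho_2)_*(\Ad(g)X)\,\rho_2(g)=\rho_2(g)\,(\rho_2)_*(X)$, the two $(\rho_2)_*\big((\dd g)\,g^{-1}\big)$--contributions cancel and \eqref{Eq:CovariantDerR} becomes $\rho_2(g)^{-1}(\nabla R^\nabla)_q(w)=\big(\dd\overline R+(\rho_2)_*(\overline\omega)\wedge\overline R\big)_q(w)$ for every $w\in V=T_qU$. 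Substituting $w=g(q)v$ turns the left--hand side into $\big(\rho_3(g^{-1})\nabla R^\nabla\big)_q(v)$ — this is where the lone factor of $g$ in the form--slot combines with $\rho_2(g)^{-1}$ on the remaining slots to produce $\rho_3(g^{-1})$ — and turns the right--hand side into $\big((\dd\overline R+(\rho_2)_*(\overline\omega)\wedge\overline R)(g)\big)_q(v)$, which is precisely \eqref{Eq:CurvAndGauges}.

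For the concluding assertion, recall from \ref{Subsec:CurvAndTor} that the second Bianchi identity yields $\nabla R^\nabla\in\Gamma(K^1(M))$, and from \ref{Subsec:PrincBundlePicture} that $K^1(M)=F(U)\times_{\rho_3}K^1(\frg)$; thus along $\varepsilon$ the section $\nabla R^\nabla$ is a map $U\to K^1(\frg)$, and since $\rho_3$ restricts to a representation of $K^1(\frg)$, the same holds for $\rho_3(g^{-1})\nabla R^\nabla=(\dd\overline R+(\rho_2)_*(\overline\omega)\wedge\overline R)(g)$. The one genuine difficulty is bookkeeping: one must track the four tensor slots carefully — above all making sure the single $g$ entering the form--slot merges with $\rho_2(g^{-1})$ on the other three into $\rho_3(g^{-1})$ — and fix the conventions for the Maurer--Cartan form and the logarithmic derivative consistently, so that the $(\dd g)\,g^{-1}$--terms really cancel instead of adding up. Alternatively, one can bypass the computation entirely by invoking the general fact that the covariant exterior derivative on an associated vector bundle transforms correctly under a change of gauge, in which case the only content left is the parallelism of $K(M)$ noted above.
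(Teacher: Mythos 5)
Your proposal is correct and takes essentially the same route as the paper: where the paper differentiates $\overline{R}=\rho_2(g^{-1})R^\nabla$ via its auxiliary gauge lemma and then inserts the structure equation \eqref{Eq:CovariantDerR}, you insert $R^\nabla=\rho_2(g)\overline{R}$ into \eqref{Eq:CovariantDerR} directly, which is the same Leibniz/intertwining computation read in the opposite direction, with the same cancellation of the $(\dd g)g^{-1}$-terms, the same use of the change-of-gauge relation for $\overline{\omega}$, and the same final evaluation at $g\cdot x$ producing $\rho_3(g^{-1})$. The concluding $K^1(\frg)$-statement via the second Bianchi identity and the restriction of $\rho_3$ to $K^1(\frg)$ likewise matches the paper.
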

 
 The proof of Proposition \ref{Prop:CurvatureAndGauges} relies on the following elementary 
 \begin{lemma}\label{Lem:AuxiliaryGaugeLemma}
  Let $H$ be a Lie group, $V$ a vector space, and $\rho\colon H\longto\GL(V)$ a representation. Let $U$ be a manifold, $X$ a vector field on $U$, and $h\colon U\longto H$, $f\colon U\longto V$ smooth functions. It holds that
  \[
   \dd(\rho(h^{-1})f)(X)=\rho(h^{-1})\qty(\dd f(X)-\rho_*(\Ad h(h^*\mu_H(X)))f),
  \]
  where $\mu_H\in\Omega^1(H,\frak{h})$ denotes the left-invariant Maurer-Cartan form of the group $H$.
 \end{lemma}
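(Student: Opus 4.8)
The plan is to derive the identity purely formally from the Leibniz rule and the homomorphism property of $\rho$, without ever computing the differential of group inversion. First I would set $F\coloneqq\rho(h^{-1})f\colon U\longto V$, so that conversely $f=\rho(h)F$, and differentiate this relation along $X$. Regarding $\rho(h)$ as a map $U\longto\GL(V)\subset\End(V)$ and $F$ as a map $U\longto V$, the Leibniz rule gives, at a fixed point $p\in U$,
\[
 \dd f(X)=\big(\dd(\rho\circ h)(X)\big)F+\rho(h)\,\dd F(X),
\]
where $\dd(\rho\circ h)(X)\in\End(V)$ and all products are compositions of linear maps on $V$. Solving for $\dd F(X)$ yields
\[
 \dd F(X)=\rho(h^{-1})\dd f(X)-\rho(h^{-1})\big(\dd(\rho\circ h)(X)\big)\rho(h^{-1})f,
\]
so everything reduces to identifying the middle factor $\dd(\rho\circ h)(X)$.

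To do so I would trivialize the tangent space of $H$ by left translation: write $\dd h_p(X_p)=(L_{h(p)})_*\xi$ with $\xi\coloneqq\mu_H(h(p))\big(\dd h_p(X_p)\big)=h^*\mu_H(X_p)\in\frak{h}$. Since $\rho$ is a group homomorphism, $\rho\big(g\exp(t\eta)\big)=\rho(g)\exp\!\big(t\,\rho_*\eta\big)$ for $g\in H$, $\eta\in\frak{h}$, and differentiating at $t=0$ gives $(\dd\rho)_g\big((L_g)_*\eta\big)=\rho(g)\circ\rho_*(\eta)$. Hence $\dd(\rho\circ h)(X_p)=\rho(h(p))\circ\rho_*\!\big(h^*\mu_H(X_p)\big)$, and substituting back,
\[
 \dd F(X)=\rho(h^{-1})\dd f(X)-\rho_*\!\big(h^*\mu_H(X)\big)\,\rho(h^{-1})f.
\]

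Finally I would invoke the intertwining relation $\rho_*\circ\Ad(g)=\Ad(\rho(g))\circ\rho_*$, obtained by differentiating $\rho\big(g\exp(t\eta)g^{-1}\big)=\rho(g)\rho(\exp t\eta)\rho(g)^{-1}$ at $t=0$; it gives $\rho_*(\eta)\,\rho(g)^{-1}=\rho(g)^{-1}\rho_*\!\big(\Ad(g)\eta\big)$ for all $g\in H$, $\eta\in\frak{h}$. Applying this with $g=h$, $\eta=h^*\mu_H(X)$ turns the last display into
\[
 \dd\big(\rho(h^{-1})f\big)(X)=\dd F(X)=\rho(h^{-1})\Big(\dd f(X)-\rho_*\!\big(\Ad h(h^*\mu_H(X))\big)f\Big),
\]
which is the assertion. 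The argument is entirely routine; the only point demanding a bit of care is the bookkeeping of the identification $T\GL(V)\cong\End(V)$, keeping track that every product appearing is a composition of endomorphisms of $V$, together with the two differentiated homomorphism identities for $\rho$ used above.
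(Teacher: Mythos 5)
Your proof is correct: the Leibniz-rule computation of $\dd(\rho\circ h)(X)=\rho(h)\circ\rho_*(h^*\mu_H(X))$ combined with the intertwining identity $\rho_*(\Ad(g)\eta)=\rho(g)\rho_*(\eta)\rho(g)^{-1}$ yields exactly the stated formula. The paper leaves this lemma unproved (it is declared elementary), and your argument is precisely the standard computation it has in mind, so there is nothing to add beyond noting that the finite-dimensionality of $V$ (assumed throughout the paper) is what licenses treating $\rho(h)$ as an $\End(V)$-valued map and applying the Leibniz rule.
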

 \begin{proof}[Proof of Proposition \ref{Prop:CurvatureAndGauges}]
  By making use of Lemma \ref{Lem:AuxiliaryGaugeLemma} we obtain for every $x\in V$:
  \[
   \dd\overline{R}(x)=\rho_2(g^{-1})\dd \Rnabla(x)-\rho_2(g^{-1})(\rho_2)_*(\Ad g(g^*\mu_G(x)))\Rnabla
  \]
  
  Because of the identity $\nabla\Rnabla=\dd\Rnabla+(\rho_2)_*(\Gamma)\wedge \Rnabla$, we then get
 \begin{align*}
  \dd\overline{R}(x)&=\rho_2(g^{-1})\nabla_x\Rnabla-\rho_2(g^{-1})(\rho_2)_*(\Gamma(x))\Rnabla-\rho_2(g^{-1})(\rho_2)_*(\Ad g(g^*\mu_G(x)))\Rnabla\\
  =&\rho_2(g^{-1})\nabla_x\Rnabla-\rho_2(g^{-1})(\rho_2)_*(\Ad g(\Ad g^{-1}\Gamma+g^*\mu_G)(x))\Rnabla\\
  =&\rho_2(g^{-1})\nabla_x\Rnabla-\rho_2(g^{-1})(\rho_2)_*(\Ad g\circ\overline{\omega}(x))\Rnabla
 \end{align*}
 
 A direct computation also shows:
 \[
  \rho_2(g^{-1})(\rho_2)_*(\Ad g\circ\overline{\omega}(x))\Rnabla=(\rho_2)_*(\overline{\omega}(x))\overline{R}=((\rho_2)_*(\overline{\omega})\wedge\overline{R})(x).
 \]
 
 It thus holds, by the mere definition of the representation $\rho_m$ in \eqref{Eq:RepRhom},
 \[
  (\dd\overline{R}+(\rho_2)_*(\overline{\omega})\wedge\overline{R})(g\cdot x)=\rho_2(g^{-1})\nabla_{g\cdot x}\Rnabla=(\rho_3(g^{-1})\nabla \Rnabla)(x)
 \]
 \end{proof}
 
 \subsection{An Initial Value Problem}\label{Subsec:AnIVP}
 
 The goal of this part is to expose how the results of the previous sections come together and provide us with conditions a curvature map must satisfy in order to have a chance of arising from the curvature tensor of a torsion-free connection.

Firstly let $W$ denote a real vector space (for the purposes of this work, it is enough to consider the case $W\subset\bigotimes^mV^*\otimes\frak{g}$, for some $m\in\NN_0$), and $\mathcal{L}_\evf\colon\Omega^\bullet(U,W)\longto\Omega^\bullet(U,W)$ denote the Lie derivative along the Euler vector field $\evf$. In terms of the flow $\phi\colon\RR\times V\longto V$ of $\evf$, it holds that
\[
  \dv{}{t}\phi_t^*\eta=\phi_t^*\mathcal{L}_\evf\eta.
 \]
 
 On the other hand, since $\phi_0^*=\Id_{\Omega^\bullet(U,W)}$ and $\lim_{t\to-\infty}\phi_t=0$, we obtain
 \begin{equation}\label{Eq:MotivationForI}
  \eta=\int_{-\infty}^0\dv{}{t}\phi_t^*\eta\,\dd t.
 \end{equation}
 
 For $k\in\NN_0$, let $I\colon\Omega^k(U,W)\longto\Omega^k(U,W)$ be the \emph{integration map} defined by
  \[
   I\eta\coloneqq
   \begin{cases}
   \int_{-\infty}^0\phi_t^*\eta\,\dd t&k\geq1,\\
   \int_{-\infty}^0\phi_t^*(\eta-\eta_0)\,\dd t&k=0,
   \end{cases}
 \]
 where $\eta_0\in C^\infty(U,W)$ denotes the constant map $\eta_0\equiv\eta(0)$.
 
 After a change of variable, we obtain that for $x\in U$,
 \begin{equation}\label{Eq:ExplicitI}
  (I\eta)_x=
  \begin{cases}
  \int_0^1t^{k-1}\eta_{tx}\,\dd t&k\geq1,\\
  \int_0^1\frac{\eta_{tx}-\eta_0}{t}\,\dd t&k=0.
  \end{cases}
 \end{equation}
 
 The integration map $I$ is easily seen to commute with both the exterior and interior derivatives\\ $\dd\colon\Omega^k(U,W)\longto\Omega^{k+1}(U,W)$, $\iota_\evf\colon\Omega^k(U,W)\longto\Omega^{k-1}(U,W)$.
 
 By using the fact the $\mathcal{L}_\evf=\iota_\evf\dd+\dd\iota_\evf$ we obtain for $\eta\in\Omega^k(U,W)=C^\infty(U,\extp^kV^*\otimes W)$:
 \begin{equation}\label{Eq:LieDerivativeAlongE}
  \mathcal{L}_\evf\eta=k\eta+\eta'(\evf),
 \end{equation}
 where $\eta'$ denotes the differential of $\eta$ considered as a $\extp^kV^*\otimes W$-valued 0-Form.
 
 For $\eta\in\Omega^k(U,\frak{g})$ we define the form $\eta\cdot\evf\in\Omega^k(U,V)$ as
 \begin{equation}\label{Eq:EtaTimesE}
  (\eta\cdot\evf)(v_1,\ldots,v_k)\coloneqq\eta(v_1,\ldots,v_k)\evf
 \end{equation}

 By virtue of \eqref{Eq:MotivationForI}, we obtain for $\eta\in\Omega^k(U,W)$, $k\in\NN_0$ the relations 
 \begin{equation}\label{Eq:LeI}
  I\mathcal{L}_\evf\eta=\mathcal{L}_\evf I\eta=
  \begin{cases}
   \eta&k\geq1,\\
   \eta-\eta_0&k=0.
  \end{cases}
 \end{equation}
 
 Writing the formal power series of $\eta\in\Omega^k(U,W)$
 \[
  \eta\approx\sum_{m\geq0}\eta^{(m)},
 \]
 where $\eta^{(m)}\in\Sym^mV^*\otimes\extp^kV^*\otimes W$, we readily obtain the formal power series expansion
 \begin{equation}\label{Eq:FormalPowerSeriesI}
  I\eta\approx\begin{cases}
   \sum_{m\geq0}\tfrac{1}{m+k}\eta^{(m)}&k\geq1,\\
   \sum_{m\geq1}\tfrac{1}{m}\eta^{(m)}&k=0.
  \end{cases}
 \end{equation}
 
 The Lie derivative along the Euler vector field $\evf$ and its associated map $I$ can be used to characterize the pull-backed forms $\hat{\theta},\hat{\omega}$:
 \begin{proposition}\label{Prop:ThetaHat-OmegaHat}
  With the above notation, it holds:
  \begin{itemize}[noitemsep]
   \item[i)] $\hat{\theta}=\dd v+I(\hat{\omega}\cdot\evf)$. In other words, for the map $h^{-1}\colon U\longto G$ it holds: $h^{-1}=\Id+I(\hat{\omega}\cdot\evf)$, where $h^{-1}$ denotes the smooth map induced by parallel translation along radial geodesics given in \eqref{Eq:hGauge} \\
   \item[ii)] $\hat{\omega}=I(\iota_\evf\hat{F})=I(\hat{R}(\evf,\hat{\theta}))$.
  \end{itemize}
 \end{proposition}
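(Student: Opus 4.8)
The plan is to treat both identities by the same mechanism: each asserts that a form pulled back by the exponential framing $\sigma$ is recovered from its Lie derivative along the Euler field $\evf$ through the integration map, by way of $I\mathcal{L}_\evf\eta=\eta$ for $1$-forms (relation \eqref{Eq:LeI}). So the whole argument reduces to computing $\mathcal{L}_\evf\hat\theta$ and $\mathcal{L}_\evf\hat\omega$ from Cartan's formula $\mathcal{L}_\evf=\iota_\evf\dd+\dd\iota_\evf$, substituting the pulled-back structure equations, and using $\iota_\evf\hat\omega=0$ from \eqref{Eq:OmegaHatXiVanishes}. The one genuinely geometric input is the radial normalization $\iota_\evf\hat\theta=v$, which I would record first: by the change-of-gauge relation \eqref{Eq:hGauge} together with $\underline\theta=\dd v$ we have $\hat\theta=h^{-1}\,\dd v$, so $\iota_\evf\hat\theta$ is the $V$-valued function $v\longmapsto h(v)^{-1}(v)$; since $h(v)=P_{\gamma_v}$ and the radial geodesic $\gamma_v(t)=tv$ has constant velocity $\gamma_v'\equiv v$, parallel transport along $\gamma_v$ carries $\gamma_v'(0)=v$ to $\gamma_v'(1)=v$, i.e. $P_{\gamma_v}(v)=v$, whence $h(v)^{-1}(v)=v$ and $\iota_\evf\hat\theta=v$.

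For part (ii): pulling the structure equation \eqref{Eq:CurvatureForm} back by $\sigma$ gives $\dd\hat\omega+\hat\omega\wedge\hat\omega=\hat F$, so Cartan's formula yields $\mathcal{L}_\evf\hat\omega=\iota_\evf\hat F-\iota_\evf(\hat\omega\wedge\hat\omega)+\dd\iota_\evf\hat\omega$. Since $\iota_\evf\hat\omega=0$ the last term vanishes and $\iota_\evf(\hat\omega\wedge\hat\omega)$ is the $1$-form $Y\longmapsto\hat\omega(\evf)\hat\omega(Y)-\hat\omega(Y)\hat\omega(\evf)=0$; hence $\mathcal{L}_\evf\hat\omega=\iota_\evf\hat F$, and applying $I$ with \eqref{Eq:LeI} gives $\hat\omega=I(\iota_\evf\hat F)$. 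For the final equality, \eqref{Eq:Pulled-back F-RThetaTheta} reads $\hat F=\hat R(\hat\theta,\hat\theta)$, so $\iota_\evf\hat F=\hat R(\iota_\evf\hat\theta,\hat\theta)=\hat R(\evf,\hat\theta)$ using $\iota_\evf\hat\theta=v$, and therefore $\hat\omega=I(\hat R(\evf,\hat\theta))$.

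For part (i): the torsion-free hypothesis turns \eqref{Eq:TorsionForm} into $\dd\theta^\nabla+\omega^\nabla\wedge\theta^\nabla=0$, which pulls back to $\dd\hat\theta=-\hat\omega\wedge\hat\theta$; hence $\mathcal{L}_\evf\hat\theta=-\iota_\evf(\hat\omega\wedge\hat\theta)+\dd\iota_\evf\hat\theta$. Using $\iota_\evf\hat\omega=0$, the contraction $\iota_\evf(\hat\omega\wedge\hat\theta)$ is the $1$-form $Y\longmapsto\hat\omega(\evf)\hat\theta(Y)-\hat\omega(Y)\hat\theta(\evf)=-\hat\omega(Y)v$, that is $\iota_\evf(\hat\omega\wedge\hat\theta)=-\hat\omega\cdot\evf$ in the notation \eqref{Eq:EtaTimesE}, while $\dd\iota_\evf\hat\theta=\dd v$ since $\iota_\evf\hat\theta=v$. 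Thus $\mathcal{L}_\evf\hat\theta=\hat\omega\cdot\evf+\dd v$, and applying $I$ (noting $I(\dd v)=\dd v$ and $I\mathcal{L}_\evf\hat\theta=\hat\theta$) gives $\hat\theta=\dd v+I(\hat\omega\cdot\evf)$. The reformulation $h^{-1}=\Id+I(\hat\omega\cdot\evf)$ is then immediate from $\hat\theta=h^{-1}\,\dd v$: writing any $V$-valued $1$-form on $U$ as $A\,\dd v$ with $A\colon U\longto\End(V)$ and cancelling the invertible factor $\dd v$ on both sides turns $\hat\theta=\dd v+I(\hat\omega\cdot\evf)$ into the claimed identity for $h^{-1}$.

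I do not expect a serious obstacle: once $\iota_\evf\hat\theta=v$ is available, the rest is a short chain of contractions and sign checks built on Cartan's formula and the property \eqref{Eq:LeI} of $I$. The step I would single out as the crux is $\iota_\evf\hat\theta=v$ itself --- the Gauss-lemma-type fact that the exponential framing is adapted to radial directions --- since this is exactly what makes $\iota_\evf(\hat\omega\wedge\hat\theta)$ and $\iota_\evf\hat R(\hat\theta,\hat\theta)$ collapse to $-\hat\omega\cdot\evf$ and $\hat R(\evf,\hat\theta)$. Beyond that, the only points demanding care are the interior-product identities for $\hat\omega\wedge\hat\omega$ and $\hat\omega\wedge\hat\theta$ and keeping straight which space of vector-valued forms each object inhabits.
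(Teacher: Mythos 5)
Your proposal is correct and follows essentially the same route as the paper: both arguments rest on $\hat\theta(\evf)=\evf$ (the paper's $h^{-1}(q)q=q$, which you justify via parallelism of the radial geodesic's velocity), then compute $\mathcal{L}_\evf\hat\theta$ and $\mathcal{L}_\evf\hat\omega$ from Cartan's formula, the pulled-back structure equations, and $\iota_\evf\hat\omega=0$, and invert $\mathcal{L}_\evf$ with $I$ via \eqref{Eq:LeI}. The only differences are expository (you spell out the contraction identities and the identification $\hat\theta=h^{-1}\dd v$ more explicitly than the paper does).
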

 
 \begin{proof}
  To \emph{i)}: Notice that, from the fact $h^{-1}(q)q=q$ for all $q\in U$, we obtain at once $\hat{\theta}(\evf)=\evf$. Thus, together with the fact that $\dd\hat{\theta}+\hat{\omega}\wedge\hat{\theta}=0$ it follows that:
  \[
   \mathcal{L}_\evf\hat{\theta}=\dd\iota_\evf\hat{\theta}+\iota_\evf\dd\hat{\theta}=\dd v-\iota_\evf(\hat{\omega}\wedge\hat{\theta})=\dd v-\iota_\evf(\hat{\omega}\wedge\dd v)=\dd v+\hat{\omega}\cdot\evf,
  \]
  where the last equality follows from the fact that $\iota_\evf\hat{\omega}=0$.
  
  Since $\hat{\theta}\in\Omega^1(U,V)$, the map $I$ is in fact the inverse of $\mathcal{L}_\evf$. The claim follows now from the linearity of $I$ and the fact that $I(\dd v)=\dd v$.
  
  To \emph{ii)}: From the fact that $\hat{F}=\dd\hat{\omega}+\hat{\omega}\wedge\hat{\omega}=\hat{R}(\hat{\theta},\hat{\theta})$ and because $\iota_\evf\hat{\omega}=0$, it thus holds:
  \[
   \iota_\evf\hat{F}=\hat{R}(\hat{\theta}(\evf),\hat{\theta})=\hat{R}(\evf,\hat{\theta})=\iota_\evf\dd\hat{\omega}=\mathcal{L}_\evf\hat{\omega},
  \]
  from which the claim immediately follows. 
 \end{proof}
 
 We finish this section by stating a simple corollary of Proposition \ref{Prop:ThetaHat-OmegaHat}, which turns out to be a key ingredient for the formulation of the main result.
 \begin{proposition}\label{Prop:IVPThetaHat}
 The pulled-back form $\hat{\theta}$ is a solution to the singular initial value problem
 \[
  \begin{cases}
   \mathcal{L}_\evf(\mathcal{L}_\evf-\Id)\theta=\hat{R}(\evf,\theta)\evf,\\
   \theta_0=\Id,\,\dd\theta_0=0.
  \end{cases}
 \]
 \end{proposition}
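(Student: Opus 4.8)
The strategy is to verify the two halves of the singular initial value problem directly, using only the structural identities for $\hat\theta$ and $\hat\omega$ already obtained in Proposition~\ref{Prop:ThetaHat-OmegaHat} together with the calculus of $\mathcal{L}_\evf$ and $I$ developed in this section. The two facts I would rely on are $\mathcal{L}_\evf\hat\theta=\dd v+\hat\omega\cdot\evf$ (which is exactly what the proof of Proposition~\ref{Prop:ThetaHat-OmegaHat}(i) establishes, via $\mathcal{L}_\evf\hat\theta=\dd\iota_\evf\hat\theta+\iota_\evf\dd\hat\theta$, $\iota_\evf\hat\theta=\evf$, $\dd\hat\theta=-\hat\omega\wedge\hat\theta$ and $\iota_\evf\hat\omega=0$) and $\mathcal{L}_\evf\hat\omega=\iota_\evf\hat F=\hat R(\evf,\hat\theta)$ from part~(ii); besides these I only need $\mathcal{L}_\evf\dd v=\dd v$ and that $\mathcal{L}_\evf I=\Id$ on $1$-forms (equation~\eqref{Eq:LeI}).

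For the differential equation I would first write $(\mathcal{L}_\evf-\Id)\hat\theta=(\dd v+\hat\omega\cdot\evf)-\hat\theta$, apply $\mathcal{L}_\evf$ once more, and substitute $\mathcal{L}_\evf\dd v=\dd v$ and $\mathcal{L}_\evf\hat\theta=\dd v+\hat\omega\cdot\evf$ again; after the obvious cancellations this collapses to $\mathcal{L}_\evf(\mathcal{L}_\evf-\Id)\hat\theta=(\mathcal{L}_\evf-\Id)(\hat\omega\cdot\evf)$. The one genuinely new point is the identity $(\mathcal{L}_\evf-\Id)(\eta\cdot\evf)=(\mathcal{L}_\evf\eta)\cdot\evf$ for a $\frg$-valued $1$-form $\eta$: this follows from the Leibniz rule for $\mathcal{L}_\evf$ applied to the evaluation pairing $\frg\otimes V\to V$, once one observes that the Euler field, regarded as the $V$-valued function $\iota_\evf\dd v\colon v\mapsto v$, is linear and hence satisfies $\mathcal{L}_\evf(\iota_\evf\dd v)=\iota_\evf\dd v$, so that the two $+\Id$ contributions cancel. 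Taking $\eta=\hat\omega$ and then inserting $\mathcal{L}_\evf\hat\omega=\hat R(\evf,\hat\theta)$ gives $\mathcal{L}_\evf(\mathcal{L}_\evf-\Id)\hat\theta=\hat R(\evf,\hat\theta)\evf$, which is the required equation.

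For the initial conditions I would use that $\iota_\evf\hat F$ and $\hat\omega\cdot\evf$ both vanish at $0$ (since $\evf_0=0$), together with the fact that $I$ sends a $1$-form vanishing at $0$ to one vanishing at $0$ — this can also be read off from the power-series formula \eqref{Eq:FormalPowerSeriesI}. From $\hat\theta=\dd v+I(\hat\omega\cdot\evf)$ this yields $\hat\theta_0=(\dd v)_0=\Id$, which is also transparent from $h(0)=P_{\gamma_0}$ being parallel transport along the constant loop, hence the identity. Since $\hat\omega=I(\iota_\evf\hat F)$ the same reasoning gives $\hat\omega_0=0$, and then the torsion-free relation $\dd\hat\theta=-\hat\omega\wedge\hat\theta$ forces $(\dd\hat\theta)_0=-\hat\omega_0\wedge\hat\theta_0=0$; equivalently, in terms of the coefficient $h^{-1}=\Id+I(\hat\omega\cdot\evf)$, the vanishing of $\hat\omega$ at $0$ makes $\hat\omega\cdot\evf$ vanish to second order there, so $\dd(h^{-1})\big|_0=0$.

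I expect the only delicate point to be bookkeeping rather than substance: keeping the two roles of $\evf$ straight — as the vector field along which one Lie-differentiates, and as the $V$-valued function $v\mapsto v$ onto which one evaluates $\frg$-valued forms — so that $\mathcal{L}_\evf$ of the latter comes out equal to itself and not to zero. Once that is handled, the whole proposition is a short formal consequence of Proposition~\ref{Prop:ThetaHat-OmegaHat} and the elementary properties of $\mathcal{L}_\evf$ and $I$.
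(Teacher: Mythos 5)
Your proof is correct and follows essentially the same route as the paper: both rest on $\mathcal{L}_\evf\hat{\theta}=\dd v+\hat{\omega}\cdot\evf$ and $\mathcal{L}_\evf\hat{\omega}=\hat{R}(\evf,\hat{\theta})$ from Proposition \ref{Prop:ThetaHat-OmegaHat}, and your identity $(\mathcal{L}_\evf-\Id)(\eta\cdot\evf)=(\mathcal{L}_\evf\eta)\cdot\evf$ merely makes explicit the Leibniz step the paper performs inline. The only cosmetic difference is in verifying $\dd\hat{\theta}_0=0$: you evaluate the structure equation $\dd\hat{\theta}=-\hat{\omega}\wedge\hat{\theta}$ at $0$, while the paper commutes $\dd$ with $I$ and uses $\dd(\hat{\omega}\cdot\evf)=(\dd\hat{\omega})\cdot\evf-\hat{\omega}\wedge\dd v$; both hinge on $\hat{\omega}_0=0$.
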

 \begin{proof}
  From \emph{i)} in Proposition \ref{Prop:ThetaHat-OmegaHat} we obtain:
  \begin{align*}
   \hat{\theta}_0=&\Id+I(\hat{\omega}\cdot\evf)_0=\Id+(\hat{\omega}\cdot\evf)_0=\Id,\\
   \dd\hat{\theta}_0=&\dd I(\hat{\omega}\cdot\evf)_0=I(\dd(\hat{\omega}\cdot\evf))_0=\int_0^1t\dd(\hat{\omega}\cdot\evf)_0\,\dd t=\frac{1}{2}\dd(\hat{\omega}\cdot\evf)_0=0,
  \end{align*}
  where the last equation follows from the fact that
  \[
   \dd(\hat{\omega}\cdot\evf)=(\dd\hat{\omega})\cdot\evf-\hat{\omega}\wedge\dd v,
  \]
  and, because of \emph{ii)} in Proposition \ref{Prop:ThetaHat-OmegaHat}, $\hat{\omega}_0=0$.
  
  It further holds:
  \begin{align*}
   \mathcal{L}_\evf^2\hat{\theta}=&\mathcal{L}_\evf(\dd v+\hat{\omega}\cdot\evf)\\
   =&\dd v+\hat{\omega}\cdot\evf+(\mathcal{L}_\evf\hat{\omega})\cdot\evf\\
   =&\mathcal{L}_\evf\hat{\theta}+\hat{R}(\evf,\hat{\theta})\evf,
  \end{align*}
  from which the claim immediately follows.
 \end{proof}
 
 \section{Main Result}\label{Sec:MainResult}
 
 In this section we prove our main result, Theorem \ref{Thm:MainThm}, which basically states that solutions to the initial value problem introduced in Proposition \ref{Prop:IVPThetaHat} turn out to be essential for the existence of torsion-free connections. Throughout this section $U$ is going to denote an open, star-shaped around $0\in V$ subset.
 
 Before stating the proof of Theorem \ref{Thm:MainThm}, we need a couple of preliminary results.
 \begin{proposition}\label{Prop:MainIVP}
  Let $S\colon U\longto K(\frak{g})$ a real analytic map. Then, the singular initial value problem
  \begin{equation}\label{Eq:MainIVP}
   \begin{cases}
    \mathcal{L}_\evf(\mathcal{L}_\evf-\Id)\theta=S(\evf,\theta)\evf,\\
    \theta_0=\Id,\,\dd\theta_0=0
   \end{cases}
  \end{equation}
  admits a unique real analytic solution.
 \end{proposition}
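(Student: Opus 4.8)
The plan is to recast \eqref{Eq:MainIVP} as a linear fixed-point equation that can be solved by a contraction argument near $0$, and then to extend the local solution to all of $U$ by reading the equation as a family of regular-singular ODEs along the radial rays.

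First I would reformulate the problem. By \eqref{Eq:LieDerivativeAlongE}, the operator $\mathcal{L}_\evf$ acts on the degree-$d$ homogeneous component of a form in $\Omega^1(U,V)$ as multiplication by $d+1$, so $\mathcal{L}_\evf(\mathcal{L}_\evf-\Id)$ acts as multiplication by $d(d+1)$; its kernel on $\Omega^1(U,V)$ is therefore exactly the constant forms, and on the subspace of forms vanishing to second order at $0$ it has the explicit right inverse $(N\beta)_x=\int_0^1\frac{1-\tau}{\tau}\beta_{\tau x}\,\dd\tau$, again valued in forms vanishing to second order. Since $S$ does not depend on $\theta$, the assignment $L\theta\coloneqq S(\evf,\theta)\cdot\evf$ is \emph{linear} in $\theta$, and it vanishes to second order at $0$ because of the two factors of $\evf$. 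Taking into account that $\theta_0=\Id$ forces the constant part of $\theta$ to equal $\dd v$, I expect \eqref{Eq:MainIVP} to be equivalent to the linear equation $\theta=\dd v+N(L\theta)$, with $\dd\theta_0=0$ then automatic.

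For uniqueness I would expand $\theta\approx\sum_{d\ge0}\theta^{(d)}$ into homogeneous components; the equation becomes $d(d+1)\theta^{(d)}=[L\theta]^{(d)}$, and $[L\theta]^{(d)}$ involves only $\theta^{(0)},\dots,\theta^{(d-2)}$ (the two $\evf$'s raise the degree by $2$), so together with $\theta^{(0)}=\Id$ this is a non-degenerate recursion determining all $\theta^{(d)}$, hence at most one formal — and therefore, by the identity theorem on the connected set $U$, at most one analytic — solution. For existence near $0$ I would prove convergence of this series by a majorant estimate: fixing $R>0$ with $\overline{B_R}\subset U$ and $M=\sup_{\overline{B_R}}\lVert S\rVert$, Cauchy's inequalities control $\lVert S^{(\ell)}\rVert$, and because $d(d+1)\ge 2$ the recursion for $\lVert\theta^{(d)}\rVert$ is dominated by one whose generating function is an explicit rational function holomorphic at $z=0$; this forces geometric decay of $\lVert\theta^{(d)}\rVert$, so $\theta$ is analytic on some ball $B_\rho$ and, checking the equation termwise, solves \eqref{Eq:MainIVP} there. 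Equivalently, one can observe directly that $NL$ is a contraction on the Banach space of bounded analytic $V$-valued $1$-forms on a small ball, with operator norm $\lesssim\rho^2\sup_{B_\rho}\lVert S\rVert<1$.

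Finally, to pass from $B_\rho$ to all of $U$, I would use once more that — by \eqref{Eq:LieDerivativeAlongE} — the equation at a point only involves $\theta$ and its radial derivatives, so it decouples along the rays $t\mapsto tv$: for fixed $v$ and $\xi\in V$, writing $\psi(t)=\theta_{tv}(\xi)$ it reads $t^2\psi''+2t\psi'=t^2 S_{tv}(v,\psi(t))(v)$, a linear second-order ODE with a regular singular point at $t=0$ and indicial roots $0$ and $-1$. Analyticity picks out the root-$0$ branch, whose Taylor coefficients satisfy the same non-degenerate recursion and are determined by $\psi(0)$, so this germ agrees with the solution already found on $B_\rho$; since the ODE is linear with coefficients analytic along the entire (star-shaped) ray it extends analytically there, and it depends analytically on the direction $v$, so the solutions assemble to a real-analytic $\theta$ on all of $U$. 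The step I expect to be the main obstacle is controlling the singularity at $0$: $\mathcal{L}_\evf(\mathcal{L}_\evf-\Id)$ is not invertible on all of $\Omega^1(U,V)$ and the explicit inverse $N$ is unbounded on general analytic forms, so the argument hinges on the forcing vanishing to second order; relatedly, the indicial roots $0,-1$ differ by an integer, and one must check — as above — that this resonance does not obstruct the analytic branch.
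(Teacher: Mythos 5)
Your proposal is correct, and its computational core is the same as the paper's: the paper also expands $\theta=\sum_{m\ge0}\theta^{(m)}$ into homogeneous components, uses that $\mathcal{L}_\evf(\mathcal{L}_\evf-\Id)$ acts as $m(m+1)$ in degree $m$, obtains the (linear, as you observe) recursion \eqref{Eq:ThetaRecurrence}, and proves convergence by a majorant argument — there the generating function $\vartheta(t)=\sum_m\norm{\theta^{(m)}}t^m$ is compared via Gr\"onwall with $s(t)=\sum_m\norm{S^{(m)}}t^m$, whose analyticity is Proposition \ref{Prop:SeveralVariablesToOneVariable}; your rational-majorant/generating-function bound plays exactly that role, and uniqueness is read off from the recursion in both arguments. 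What you do differently, and what it buys: the explicit right inverse $N$ and the fixed-point form $\theta=\dd v+N(L\theta)$ make the linearity of the problem transparent, and your ray-wise reduction to the regular-singular linear ODE $t^2\psi''+2t\psi'=t^2S_{tv}(v,\psi)v$ (indicial roots $0,-1$, correctly computed) lets you continue the local solution along every radial segment, so you get the solution on all of the star-shaped $U$, whereas the paper's proof only yields (and Theorem \ref{Thm:MainThm} only uses, via the shrunken $U'$) analyticity on a small ball — a genuine strengthening. Three points to tighten: (1) the ``equivalently, a contraction'' aside needs a complexification, since a uniform limit of real-analytic forms need not be real-analytic, so either work with bounded holomorphic forms on a complex ball or simply keep the majorant series; (2) ``Cauchy's inequalities'' applied to $\sup_{\overline{B_R}}\norm{S}$ are not directly available for a real-analytic map — use the criterion of Proposition \ref{Prop:DerOfAnaFunctions} (as the paper does, which yields $\norm{S^{(\ell)}}\le C(\ell+1)^{n-1}r^{-\ell}$, still good enough for your majorant); (3) assembling the ray-wise solutions into one real-analytic $\theta$ on $U$ requires joint analyticity in $(t,v)$, which does not follow from Frobenius theory applied for each fixed $v$ at the singular endpoint — you need the power-series construction near $t=0$ with a majorant uniform in $v$ (which your estimate does provide) combined with analytic dependence on parameters for the regular ODE away from $t=0$, plus the scaling consistency $\psi(st;v)=\psi(t;sv)$; this deserves an explicit sentence but is not an obstruction.
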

 
 This result can be proved by means of the Theorem of Cauchy-Kovalevskaya in the singular setting. However, because of the nature of this particular initial value problem, one can prove the existence and uniqueness of the analytic solution to \eqref{Eq:MainIVP} by purely elementary considerations. We follow this path, but in order to do so, some auxiliary results are needed.
 
 Firstly we recall the following criterion for real-analyticity of a function.
 \begin{proposition}\label{Prop:DerOfAnaFunctions}\cite[Chapter~2, Proposition~2.2.10]{krantz1992primer}
  Let $U\subset\RR^m$ be an open neighborhood of $0$. A smooth map $F\colon U\longto\RR^\ell$ is real-analytic if, and only if, for every $x_0\in U$ there exists an open neighborhood $U_0$ of $x_0$, and positive constants $C,r$ such that for every multi-index $\mu\in\NN_0^m$, $x\in U_0$:
  \[
   \norm{\frac{1}{\mu!}D^\mu F(x)}\leq\frac{C}{r^{\abs{\mu}}},
  \]
  where $\norm{\cdot}$ denotes the Euclidean norm of $\RR^\ell$, and for $\mu=(\mu_1,\ldots,\mu_m)$, $\mu!=\mu_1!\cdots\mu_m!$, $\abs{\mu}=\sum_{i=1}^m\mu_i$.
 \end{proposition}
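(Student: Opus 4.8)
The plan is to prove both implications locally, since the hypothesis and the conclusion are each pointwise in $x_0$; so I fix $x_0\in U$ and, after a translation, assume $x_0=0$. Because the Euclidean norm of a vector in $\RR^\ell$ is comparable to the maximum of the absolute values of its components, the stated estimate holds for $F$ if and only if it holds, with adjusted constants, for each scalar component $F_i$. This reduces both directions to the scalar case, with the constants reassembled at the end.

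For the forward implication I would argue by complexification. Suppose $F$ is real-analytic near $0$, so on a polydisc each $F_i$ is the sum of an absolutely convergent power series $\sum_\mu a_\mu^{(i)} x^\mu$. By Abel's lemma in several variables, absolute convergence at a real point with nonzero coordinates forces absolute convergence on the corresponding complex polydisc; hence each $F_i$ extends to a function $\tilde F_i$ holomorphic and bounded, say by $M_i$, on a complex polydisc $P=\{z\in\CC^m:|z_j|<R\}$ (taking a common $R=\min_i R_i$). For any real $x$ with $\|x\|_\infty<R/2$ the closed complex polydisc of polyradius $R/2$ centred at $x$ lies inside $P$, so the multivariate Cauchy integral formula yields the Cauchy estimates $|D^\mu F_i(x)|\le M_i\,\mu!\,(R/2)^{-|\mu|}$. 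Setting $r=R/2$ and $C=\sqrt{\ell}\,\max_i M_i$ then gives the required bound uniformly on $U_0=\{\|x\|_\infty<R/2\}$.

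For the converse, assume the bounds $\|D^\mu F(x)\|\le C\,\mu!\,r^{-|\mu|}$ hold for all $\mu$ and all $x$ in a neighborhood $U_0$ of $0$. I would show that the Taylor series of $F$ at $0$ converges to $F$ on a smaller ball, via Taylor's formula with integral remainder,
\[
 F(y)=\sum_{|\mu|<N}\frac{D^\mu F(0)}{\mu!}y^\mu+R_N(y),\qquad
 R_N(y)=\sum_{|\mu|=N}\frac{N}{\mu!}\,y^\mu\int_0^1(1-t)^{N-1}D^\mu F(ty)\,\dd t.
\]
For $y$ small enough that the segment $[0,y]$ stays in $U_0$, the derivative bound together with $\int_0^1(1-t)^{N-1}\,\dd t=1/N$ gives $\|R_N(y)\|\le C\,r^{-N}\sum_{|\mu|=N}|y^\mu|$, and the multinomial theorem yields $\sum_{|\mu|=N}|y^\mu|\le(\sum_j|y_j|)^N=\|y\|_1^N$. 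Hence $\|R_N(y)\|\le C(\|y\|_1/r)^N\to 0$ as $N\to\infty$ whenever $\|y\|_1<r$, so the Taylor series converges to $F$ on $\{\|x\|_1<r\}$ and $F$ is real-analytic at $x_0$.

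The main obstacle is the forward direction. Real-analyticity gives, a priori, only a convergent Taylor expansion — hence derivative bounds — at each individual point, whereas the claimed estimate demands a single pair of constants $C,r$ valid uniformly over an entire neighborhood $U_0$. Bridging this gap is precisely what complexification accomplishes: passing to the holomorphic extension replaces the pointwise convergence data by one bounded holomorphic function, and the Cauchy integral formula then produces uniform-constant derivative bounds at every nearby real point simultaneously. The converse, by contrast, is a routine remainder estimate once the integral form of Taylor's theorem and the multinomial identity are at hand.
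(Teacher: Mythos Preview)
The paper does not supply its own proof of this proposition: it is stated with a citation to \cite[Chapter~2, Proposition~2.2.10]{krantz1992primer} and used as a black-box tool in the proof of Proposition~\ref{Prop:SeveralVariablesToOneVariable}. Your argument is correct and is essentially the standard one found in that reference --- complexification and Cauchy estimates for the forward direction, Taylor's formula with integral remainder and the multinomial bound for the converse --- so there is nothing to compare.
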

 
 Before we state the following auxiliary result we notice that the symmetric powers $\Sym^kV$ can be endowed with an inner product coming from the one in $V$. Explicitly we define \\ $\langle\cdot,\cdot\rangle\colon\Sym^k V\times\Sym^k V\longto\RR$ as the bilinear map that on monomials is given by
 \[
  \langle v_1\cdots v_k,w_1\cdots w_k\rangle\coloneqq\frac{1}{k!}\perm(\langle v_i,w_j\rangle)_{1\leq i,j\leq k}\coloneqq\frac{1}{k!}\sum_{\sigma\in S_k}\prod_{i=1}^k\langle v_i,w_{\sigma(i)}\rangle.
 \]
 
 The induced norm by this normalized inner product satisfies that, for any $v\in V$, it holds $\norm{v^k}=\norm{v}^k$.
 
 Proposition \ref{Prop:DerOfAnaFunctions} allows to establish the following
 \begin{proposition}\label{Prop:SeveralVariablesToOneVariable}
 Let $V$, $(W,\norm{\cdot}_W)$ be finite-dimensional vector spaces, say $\dim V=n$. If the function $F\colon V\longto W$ is analytic near the origin, then 
 \[
 f(t)\coloneqq\sum_{m\geq0}\norm{F^{(m)}}t^m
 \]
 defines an analytic function near $0\in\RR$, where $F^{(m)}\in\Sym^mV^*\otimes W$ denotes the unique polynomial that satisfies
 \[
  F^{(m)}(v^m)=\sum_{\abs{\mu}=m}\frac{1}{\mu!}D^\mu F(0)v^\mu,
 \]
 and $\norm{\cdot}$ denotes the norm on $\Sym^mV^*\otimes W$ given by
 \[
  \norm{Q}\coloneqq\max_{\norm{v}=1}\norm{Q(v^m)}_W.
 \]
 \end{proposition}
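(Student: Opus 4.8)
The plan is to reduce the several-variables statement to the classical one-variable analyticity criterion (Proposition \ref{Prop:DerOfAnaFunctions}) by a direct estimate relating the coefficient norms $\norm{F^{(m)}}$ to the derivatives $D^\mu F(0)$. First I would fix a positive constant $r$ and a constant $C$ furnished by Proposition \ref{Prop:DerOfAnaFunctions} applied to $F$ at the origin, so that $\norm{\frac{1}{\mu!}D^\mu F(0)}_W\leq C r^{-\abs{\mu}}$ for every multi-index $\mu\in\NN_0^n$. The goal is then to show $\norm{F^{(m)}}\leq C' R^{-m}$ for suitable $C',R>0$, since that immediately gives that $f(t)=\sum_{m\geq0}\norm{F^{(m)}}t^m$ has radius of convergence at least $R$, hence is analytic near $0\in\RR$.

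The key step is to bound $\norm{F^{(m)}}=\max_{\norm{v}=1}\norm{F^{(m)}(v^m)}_W$. Writing $v=(v_1,\dots,v_n)$ in the standard basis, the definition gives $F^{(m)}(v^m)=\sum_{\abs{\mu}=m}\frac{1}{\mu!}D^\mu F(0)\,v^\mu$, where $v^\mu=v_1^{\mu_1}\cdots v_n^{\mu_n}$. Using the triangle inequality and the bound on the derivatives,
\[
\norm{F^{(m)}(v^m)}_W\leq\sum_{\abs{\mu}=m}\frac{C}{r^{\abs{\mu}}}\,\abs{v^\mu}=\frac{C}{r^m}\sum_{\abs{\mu}=m}\abs{v^\mu}.
\]
For $\norm{v}=1$ one has $\abs{v_i}\leq1$, so $\abs{v^\mu}\leq1$ for every $\mu$ with $\abs{\mu}=m$; since the number of such multi-indices is $\binom{m+n-1}{n-1}\leq(m+1)^{n-1}\leq 2^{m+n-1}$ (or more crudely $\leq n^m$), we obtain $\norm{F^{(m)}}\leq C\,(2/r)^m\,2^{n-1}$, i.e. the desired geometric bound with $C'=2^{n-1}C$ and $R=r/2$. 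One could instead use $\sum_{\abs{\mu}=m}\abs{v^\mu}\leq(\sum_i\abs{v_i})^m\leq n^{m}$ via the multinomial theorem, yielding $R=r/n$; either constant suffices.

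The main obstacle, such as it is, is purely bookkeeping: making sure the normalization of the norm on $\Sym^mV^*\otimes W$ matches the monomial expansion — in particular that the definition $\norm{Q}=\max_{\norm{v}=1}\norm{Q(v^m)}_W$ together with the identity $F^{(m)}(v^m)=\sum_{\abs\mu=m}\frac{1}{\mu!}D^\mu F(0)v^\mu$ (which is exactly the degree-$m$ term of the Taylor expansion evaluated on the diagonal) lets the derivative estimates pass through cleanly. No polarization or genuine multilinear-algebra input is needed, because we only ever evaluate $F^{(m)}$ on diagonal arguments $v^m$; the estimate on the diagonal is all that Proposition \ref{Prop:DerOfAnaFunctions} requires for the converse direction. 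Finally I would note that convergence of $f$ near $0$ is what will later be used to dominate the formal power series produced by the iteration scheme in Proposition \ref{Prop:MainIVP}, which is the reason this lemma is isolated here.
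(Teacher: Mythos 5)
Your argument is essentially the paper's own proof: both apply Proposition \ref{Prop:DerOfAnaFunctions} at the origin, bound $\norm{F^{(m)}}$ by $C r^{-m}$ times the number of multi-indices of length $m$ (using $\abs{v^\mu}\leq 1$ on the unit sphere), and conclude convergence of $f$ near $0$ --- the paper keeps the sharper count $\binom{n+m-1}{m}\leq(m+1)^{n-1}$ and thereby the full radius $r$, while you coarsen to a geometric bound, which suffices for the statement. One small slip: the intermediate inequality $(m+1)^{n-1}\leq 2^{m+n-1}$ fails in general (e.g.\ $n=5$, $m=2$ gives $81>64$), but your conclusion stands because $\binom{m+n-1}{n-1}\leq 2^{m+n-1}$ holds directly, and your alternative multinomial bound $\sum_{\abs{\mu}=m}\abs{v^\mu}\leq n^{m}$ is also valid.
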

 \begin{proof}
  Because of Proposition \ref{Prop:DerOfAnaFunctions}, there exists positive constants $C,r$ such that for every $x$ in a sufficiently small open ball around $0$, and every multi-index $\mu\in\NN_0^n$,
  \[
   \norm{\frac{1}{\mu!}D^\mu F(x)}_W\leq\frac{C}{r^{\abs{\mu}}}.
  \]
  
  For every unitary $v\in V$ it thus hold:
  \begin{align*}
   \norm{F^{(m)}(v^m)}_W\leq&\sum_{\abs{\mu}=m}\norm{\frac{1}{\mu!}D^\mu F(0)}_W\norm{v^\mu}\overset{\scriptscriptstyle{\norm{v^\mu}=\norm{v}^{\abs{\mu}}}}{=}\sum_{\abs{\mu}=m}\norm{\frac{1}{\mu!}D^\mu F(0)}_W\\
   \leq&\sum_{\abs{\mu}=m}\frac{C}{r^m}\\
   =&\frac{C}{r^m}\dim(\Sym^m V)=\frac{C}{r^m}\binom{n+m-1}{m}\\
\leq&\frac{C}{r^m}(m+1)^{n-1}
  \end{align*}
  and so we conclude, that for every $m\in\NN_0$,
  \[
   \norm{F^{(m)}}\leq\frac{C}{r^m}(m+1)^{n-1}.
  \]
  
  Now, because for $\abs{t}<r$, $k\in\NN_0$, it holds that
  \[
   \sum_{m\geq0}(m+1)^k\qty(\frac{t}{r})^m=\qty(t\dv{}{t}+\Id)^k\frac{r}{r-t},
  \]
  we thus obtain
  \[
   \sum_{m\geq0}\abs{\norm{F^{(m)}}t^m}\leq C\sum_{m\geq0}(m+1)^{n-1}\qty(\frac{\abs{t}}{r})^m<\infty,
  \]
  which then implies, that for every $\ve>0$, $\sum_{m\geq0}\norm{F^{(m)}}t^m$ uniformly converges on $[-r+\ve,r-\ve]$, thus yielding $f\in C^\omega((-r,r))$.
 \end{proof}
 
 We now have all the necessary tools to prove Proposition \ref{Prop:MainIVP}
 \begin{proof}[Proof of Proposition \ref{Prop:MainIVP}]
  Let $\theta=\sum_{m\geq0}\theta^{(m)}$ be a formal solution to \eqref{Eq:MainIVP}. From the initial data we obtain:
  \begin{multicols}{2}
   \noindent
   \[
    \theta^{(0)}=\Id,
   \]
   \[
    \theta^{(1)}=0.
   \]
  \end{multicols}
  
  In light of \eqref{Eq:FormalPowerSeriesI} it holds:
  \[
   \mathcal{L}_\evf(\mathcal{L}_\evf-\Id)\theta=\mathcal{L}_\evf\sum_{m\geq1}m\theta^{(m)}=\sum_{m\geq1}m(m+1)\theta^{(m)}=\sum_{m\geq0}(m+1)(m+2)\theta^{(m+1)}.
  \]
  
  Because for every $q\in U$, $S_q=\sum_{m\geq0}S^{(m)}(q^m)$, we conclude that the formal power series of the map $S(\evf,\theta)\evf$ satisfies
  \[
   (S(\evf,\theta)\evf)_q=\sum_{m\geq0}\sum_{k+\ell=m-2}S^{(k)}(q^k)(q,\theta^{(\ell)}(q^\ell))q\in V^*\otimes V.
  \]
  
  With this we thus obtain, that for every $q\in U$, $m\in\NN_0$,
  \begin{eqnarray}
    \theta^{(0)}&=&\Id,\\
    (m+1)\theta^{(m+1)}(q^{m+1})&=&\frac{1}{m+2}\sum_{k+\ell=m-1}S^{(k)}(q^k)(q,\theta^{(\ell)}(q^\ell))q\label{Eq:ThetaRecurrence}
  \end{eqnarray}
  
  For any $q\in U$ with $\norm{q}\leq1$, and by endowing $\extp^2V\otimes V$ with the natural norm induced by the inner product in $V$, we obtain for every $m\in\NN_0$
  \begin{align*}
   (m+1)\norm{\theta^{(m+1)}(q^{m+1})}\leq&\frac{1}{m+2}\sum_{k+\ell=m-1}\norm{S^{(k)}(q^k)}\norm{\theta^{(\ell)}(q^\ell)}\norm{q}^2\\
   \leq&\sum_{k+\ell=m-1}\norm{S^{(k)}}\norm{\theta^{(\ell)}}\\
   \leq&\sum_{k+\ell=m}\norm{S^{(k)}}\norm{\theta^{(\ell)}},
  \end{align*}
  implying thus for every $m\in\NN_0$
  \begin{equation}\label{Eq:ThetaNormEstimate}
   (m+1)\norm{\theta^{(m+1)}}\leq\sum_{k+\ell=m}\norm{S^{(k)}}\norm{\theta^{(\ell)}}.
  \end{equation}
  
  Now, since $S\in C^\omega(U,K(\frak{g}))$, the map $s(t)\coloneqq\sum_{m\geq0}\norm{S^{(m)}}t^m$ is analytic near $0$, according to Proposition \ref{Prop:SeveralVariablesToOneVariable}. Notice also that $\vartheta(t)\coloneqq\sum_{m\geq0}\norm{\theta^{(m)}}t^m$ is absolutely convergent near $0$, since from \eqref{Eq:ThetaNormEstimate} we obtain that for $t>0$,
  \[
   \vartheta'(t)\leq s(t)\vartheta(t),
  \]
  and so, by Grönwall's lemma we obtain
  \[
   \vartheta(t)\leq\vartheta(0)e^{\int_0^t s(u)\,\dd u}=e^{\int_0^ts(u)\,\dd u},
  \]
  which due to the analyticity of $t\longsto e^{\int_0^ts(u)\,\dd u}$ near 0, implies the absolute convergence of $\vartheta$ close to $0$.
  
  Thus, for $x\in B_\rho(0)$, for sufficiently small $\rho$, we have proved the absolute convergence of
  \[
   \sum_{m\geq0}\theta^{(m)}(x^m),
  \]
  as well as its uniform convergence on compact subsets of $B_\rho(0)$. This implies together with the recursion \eqref{Eq:ThetaRecurrence}, the analytic solution $\theta$ is uniquely given, yielding thus the claim.
 \end{proof}
 
 The analytic solution to the above initial value problem admits a concrete characterization in terms of a special $\frak{g}$-valued 1-form:
 \begin{proposition}\label{Prop:ThetaProperties}
  Let $S\colon U\longto K(\frak{g})$ be a real analytic map, let $\theta$ be the real analytic solution to the singular initial value problem \eqref{Eq:MainIVP}. Set $\omega=I(S(\evf,\theta))$. It then holds:
  \begin{itemize}[noitemsep]
   \item[i)] $\theta=\dd v+I(\omega\cdot\evf)$,\\
   \item[ii)] $\mathcal{L}_\evf(\dd\theta+\omega\wedge\theta)=(\dd\omega+\omega\wedge\omega-S(\theta,\theta))\cdot\evf$.
  \end{itemize}
 \end{proposition}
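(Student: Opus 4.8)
The plan is to reduce both claims to purely formal manipulations with the operators $\mathcal{L}_\evf$, $\iota_\evf$ and $I$; the only ingredient that will not be formal is the first Bianchi identity, which $S$ satisfies pointwise because it takes values in $K(\frak{g})$. Throughout I would abbreviate $\Theta:=\dd\theta+\omega\wedge\theta\in\Omega^2(U,V)$ and $\mathcal{F}:=\dd\omega+\omega\wedge\omega\in\Omega^2(U,\frak{g})$, and I would start by recording two elementary consequences of $\omega=I(S(\evf,\theta))$: since $S(\evf,\theta)\in\Omega^1(U,\frak{g})$, by \eqref{Eq:LeI} we get $\mathcal{L}_\evf\omega=S(\evf,\theta)$, while $\iota_\evf\omega=I(\iota_\evf S(\evf,\theta))=I(S(\evf,\evf))=0$ because $I$ commutes with $\iota_\evf$ and $S$ is skew. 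I would also use freely that $\dd$, $\iota_\evf$, $\mathcal{L}_\evf$ are (anti)derivations with respect to the pairing $\Omega^\bullet(U,\frak{g})\times\Omega^\bullet(U,V)\to\Omega^\bullet(U,V)$ induced by the $\frak{g}$-action on $V$, that this pairing is associative in the $\frak{g}$-factor, that $\mathcal{L}_\evf\dd v=\dd v$ and $\mathcal{L}_\evf\evf=\evf$ (with $\evf$ regarded as the $V$-valued $0$-form $v\mapsto v$, so that $\dd\evf=\dd v$), and that $I$ inverts $\mathcal{L}_\evf$ on forms of positive degree.

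For \emph{i)}, I would apply $I$ to the first equation of \eqref{Eq:MainIVP}. As $(\mathcal{L}_\evf-\Id)\theta$ is a $1$-form, $I\mathcal{L}_\evf$ fixes it, so the equation becomes $(\mathcal{L}_\evf-\Id)\theta=I\big(S(\evf,\theta)\cdot\evf\big)=I\big((\mathcal{L}_\evf\omega)\cdot\evf\big)$. The derivation identity $\mathcal{L}_\evf(\omega\cdot\evf)=(\mathcal{L}_\evf\omega)\cdot\evf+\omega\cdot\evf$ then turns the right-hand side into $\omega\cdot\evf-I(\omega\cdot\evf)$, whence $(\mathcal{L}_\evf-\Id)\theta=\omega\cdot\evf-I(\omega\cdot\evf)$. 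A one-line computation, using $(\mathcal{L}_\evf-\Id)\dd v=0$ and $\mathcal{L}_\evf I(\omega\cdot\evf)=\omega\cdot\evf$, gives the same value for $(\mathcal{L}_\evf-\Id)\big(\dd v+I(\omega\cdot\evf)\big)$; hence $\theta-\dd v-I(\omega\cdot\evf)\in\ker(\mathcal{L}_\evf-\Id)$, i.e.\ has constant coefficients by \eqref{Eq:LieDerivativeAlongE}, and since it vanishes at $0$ (because $\theta_0=\Id$ and $(\omega\cdot\evf)_0=0$) it vanishes identically, which is \emph{i)}. As a by-product, \emph{i)} yields $\iota_\evf\theta=\iota_\evf\dd v+I(\iota_\evf(\omega\cdot\evf))=\evf$ and $\mathcal{L}_\evf\theta=\dd v+\omega\cdot\evf$, which I would feed into \emph{ii)}.

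For \emph{ii)}, the first reduction is $\iota_\evf\Theta=0$: from $\iota_\evf\dd\theta=\mathcal{L}_\evf\theta-\dd\iota_\evf\theta=(\dd v+\omega\cdot\evf)-\dd\evf=\omega\cdot\evf$ and $\iota_\evf(\omega\wedge\theta)=(\iota_\evf\omega)\wedge\theta-\omega\wedge(\iota_\evf\theta)=-\omega\cdot\evf$, the two terms cancel. Therefore $\mathcal{L}_\evf\Theta=\dd\iota_\evf\Theta+\iota_\evf\dd\Theta=\iota_\evf\dd\Theta$. Next I would compute $\dd\Theta=\dd\omega\wedge\theta-\omega\wedge\dd\theta$, substitute $\dd\theta=\Theta-\omega\wedge\theta$ and use associativity to obtain $\dd\Theta=\mathcal{F}\wedge\theta-\omega\wedge\Theta$. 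Contracting with $\iota_\evf$: the term $\iota_\evf(\omega\wedge\Theta)$ vanishes since $\iota_\evf\omega=0$ and $\iota_\evf\Theta=0$, while $\iota_\evf(\mathcal{F}\wedge\theta)=(\iota_\evf\mathcal{F})\wedge\theta+\mathcal{F}\cdot\evf$ with $\iota_\evf\mathcal{F}=\iota_\evf\dd\omega+\iota_\evf(\omega\wedge\omega)=\mathcal{L}_\evf\omega=S(\evf,\theta)$. This leaves
\[
 \mathcal{L}_\evf\Theta=S(\evf,\theta)\wedge\theta+\mathcal{F}\cdot\evf.
\]

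It remains to identify $S(\evf,\theta)\wedge\theta$ with $-S(\theta,\theta)\cdot\evf$, and this is the only step that is not purely formal. Evaluating both $V$-valued $2$-forms on $X,Y$ at a point $v$ and setting $a:=\evf_v=v$, $b:=\theta_v(X)$, $c:=\theta_v(Y)$, the asserted equality reads $S_v(a,b)c-S_v(a,c)b=-S_v(b,c)a$, which after rewriting $-S_v(a,c)b=S_v(c,a)b$ is precisely the first Bianchi relation $S_v(a,b)c+S_v(b,c)a+S_v(c,a)b=0$, valid because $S_v\in K(\frak{g})$. Substituting gives $\mathcal{L}_\evf\Theta=(\mathcal{F}-S(\theta,\theta))\cdot\evf=(\dd\omega+\omega\wedge\omega-S(\theta,\theta))\cdot\evf$, which is \emph{ii)}. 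The main obstacle I anticipate is clerical — keeping track of signs and orderings in the wedge products of $\frak{g}$- and $V$-valued forms, and of the two roles of $\evf$ (vector field versus the $0$-form $v\mapsto v$) — together with spotting that the defining condition $S_v\in K(\frak{g})$ is exactly the hypothesis that makes the computation in \emph{ii)} close.
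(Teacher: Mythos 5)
Part \emph{i)} of your proposal is correct and is essentially the paper's own argument in a slightly different packaging: you invert $\mathcal{L}_\evf$ on the $1$-form $(\mathcal{L}_\evf-\Id)\theta$, compare with $\dd v+I(\omega\cdot\evf)$, and finish by observing that the difference is a fixed point of $\mathcal{L}_\evf$ vanishing at $0$. (One small point: \eqref{Eq:LieDerivativeAlongE} alone only gives $\eta'(\evf)=0$; the conclusion that such a $1$-form is constant is exactly Lemma \ref{Lem:FixedPointsOfLieE}, which is what you should invoke.)

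For \emph{ii)}, however, your route — Cartan's formula applied to $\Theta=\dd\theta+\omega\wedge\theta$ after showing $\iota_\evf\Theta=0$ — hinges on the identities $\iota_\evf\omega=0$ and $\iota_\evf\theta=\evf$, and your justification of these is circular. You claim $\iota_\evf\omega=I(\iota_\evf S(\evf,\theta))=I(S(\evf,\evf))=0$ ``because $S$ is skew'', but $\iota_\evf S(\evf,\theta)=S(\evf,\theta(\evf))$, so this step already presupposes $\theta(\evf)=\evf$; and you obtain $\iota_\evf\theta=\evf$ only afterwards, as a by-product of \emph{i)}, via $\iota_\evf(\omega\cdot\evf)=(\iota_\evf\omega)\cdot\evf=0$, i.e.\ by using $\iota_\evf\omega=0$. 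For the genuine pulled-back tautological form $\hat{\theta}$ the identity $\hat{\theta}(\evf)=\evf$ is proved geometrically in the paper (from $h^{-1}(q)q=q$), but in this proposition $\theta$ is only the abstract analytic solution of \eqref{Eq:MainIVP}, so the identity requires a proof. It is true and the gap is repairable: for instance, set $f\coloneqq\iota_\evf\theta-\evf$; from \emph{i)} and the skewness of $S$ one gets $f=I\bigl(I(S(\evf,f))\cdot\evf\bigr)$, and comparing lowest-order terms of the power series forces $f\equiv0$, since the right-hand side raises the vanishing order by two; alternatively, contract \eqref{Eq:MainIVP} with $\evf$ and invoke the uniqueness of the power-series solution as in Proposition \ref{Prop:MainIVP}. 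Note that the paper's proof of \emph{ii)} sidesteps the issue entirely: it computes $\mathcal{L}_\evf(\dd\theta+\omega\wedge\theta)$ directly by the Leibniz rule from $\mathcal{L}_\evf\theta=\dd v+\omega\cdot\evf$ and $\mathcal{L}_\evf\omega=S(\evf,\theta)$, using the Bianchi identity $S(\evf,\theta)\wedge\theta=-S(\theta,\theta)\cdot\evf$ only at the last step, and never needs $\iota_\evf\omega=0$ or $\iota_\evf\theta=\evf$. Once you either supply the missing lemma or switch to that direct computation, your argument closes.
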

 
 Before stating the proof of the Proposition, we make the following observation:
 \begin{lemma}\label{Lem:FixedPointsOfLieE}
  Let $U\subset V$ a star-shaped around $0$ open subset, $W$ a vector space. Then\\ $\eta\in\Omega^1(U,W)$ satisfies $\mathcal{L}_\evf\eta=\eta$ if, and only if, $\eta\equiv\eta(0)$.
 \end{lemma}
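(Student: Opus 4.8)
The plan is to reduce the statement to the elementary fact that a smooth function on a star-shaped domain whose radial derivative vanishes identically must be constant, equal to its value at the centre; the only real work is the translation of hypotheses and a little care at the origin. Concretely, I would regard $\eta\in\Omega^1(U,W)$ as a smooth map $\eta\colon U\longto V^*\otimes W$, and apply \eqref{Eq:LieDerivativeAlongE} with $k=1$: it reads $\mathcal{L}_\evf\eta=\eta+\eta'(\evf)$, where $\eta'(\evf)_x=D\eta(x)[x]$ is the directional derivative of $\eta$ along the Euler field $\evf_x=x$. Hence the condition $\mathcal{L}_\evf\eta=\eta$ is equivalent to $D\eta(x)[x]=0$ for every $x\in U$.

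The backward implication is then immediate: if $\eta\equiv\eta(0)$, the map $\eta$ is constant, so $D\eta\equiv 0$ and $\mathcal{L}_\evf\eta=\eta$. For the forward implication, fix $x\in U$. Since $U$ is star-shaped around $0$, the segment $t\mapsto tx$ lies in $U$ for all $t\in[0,1]$, so $g(t)\coloneqq\eta(tx)$ is a well-defined smooth curve in $V^*\otimes W$ on $[0,1]$, and for $t\in(0,1]$ one has $g'(t)=D\eta(tx)[x]=t^{-1}D\eta(tx)[tx]=0$. Thus $g$ is constant on $(0,1]$, hence, by continuity, on all of $[0,1]$; evaluating at the endpoints gives $\eta(x)=g(1)=g(0)=\eta(0)$. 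As $x\in U$ was arbitrary, $\eta\equiv\eta(0)$.

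I do not expect a serious obstacle here; the one point that genuinely needs attention is that $g'$ vanishes a priori only for $t>0$, so one must invoke continuity of $g$ at $t=0$ to pass to the limit and close the argument. The same limiting step reappears if one prefers to argue at the level of the flow $\phi_t(v)=e^tv$ of $\evf$: from $\tfrac{\dd}{\dd t}\phi_t^*\eta=\phi_t^*\mathcal{L}_\evf\eta=\phi_t^*\eta$ together with $\phi_0^*=\Id$ one obtains $\phi_t^*\eta=e^t\eta$, which unwinds to $\eta_{e^tx}=\eta_x$ for all $t\le 0$, and letting $t\to-\infty$ gives $\eta_x=\eta_0$ by continuity.
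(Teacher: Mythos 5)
Your proposal is correct and takes essentially the same route as the paper: both use \eqref{Eq:LieDerivativeAlongE} to reduce the hypothesis $\mathcal{L}_\evf\eta=\eta$ to the vanishing of the radial derivative $\eta'(\evf)$, and then show that $t\longmapsto\eta(tx)$ is constant along each ray of the star-shaped domain. The only (inessential) difference is at $t=0$: you close the argument by continuity of $t\mapsto\eta(tx)$, while the paper differentiates the identity $e^i\partial_i\eta\equiv0$ to obtain $\partial_j\eta(0)=0$ and hence that the derivative vanishes at $t=0$ as well; both settle the endpoint equally well.
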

 \begin{proof}
  That the constant form $\eta(0)$ satisfies the claim is easily seen from the definition. Suppose now $\mathcal{L}_\evf\eta=\eta$. Because of \eqref{Eq:LieDerivativeAlongE}, this implies, $\eta'(\evf)=0$. This is equivalent to the fact that $e^i\partial_i\eta\equiv0$. From this we thus conclude, that for all $j$, $\partial_j\eta=-e^i\partial_{ij}\eta$, and so we obtain in particular, $\partial_j\eta(0)=0$. 
  
  Now, for $q\in U$ and $t>0$ we notice that the smooth map $t\longto\eta(tq)$ is constant, since
  \[
   \dv{}{t}\eta(tq)=q^j\partial_j\eta(tq)=\frac{1}{t}(e^j\partial_j\eta)_{tq}=0.
  \]
  
  In fact, this map is everywhere constant, since,
  \[
   \eval{\dv{}{t}}_0\eta(tq)=q^j\partial_j\eta(0)=0.
  \]
  
  This readily yields the claim.
 \end{proof}
 
 \begin{proof}[Proof of Proposition \ref{Prop:ThetaProperties}]
 
 To \emph{i)}: Set $\tilde{\theta}\coloneqq\dd v+I(\omega\cdot\evf)$. It thus holds:
 \begin{align*}
  \mathcal{L}_\evf^2\tilde{\theta}=&\mathcal{L}_\evf(\dd v+\omega\cdot\evf)\\
  =&\dd v+\omega\cdot\evf+(\mathcal{L}_\evf\omega)\cdot\evf\\
  =&\mathcal{L}_\evf\tilde{\theta}+S(\evf,\theta)\evf,
 \end{align*}
 from which we obtain:
 \[
  \mathcal{L}_\evf(\mathcal{L}_\evf-\Id)\tilde{\theta}=S(\evf,\theta)\evf=\mathcal{L}_\evf(\mathcal{L}_\evf-\Id)\theta.
 \]
 
 That is, it holds:
 \[
  (\mathcal{L}_\evf-\Id)\tilde{\theta}=(\mathcal{L}_\evf-\Id)\theta,
 \]
 or equivalently, 
 \[
  \mathcal{L}_\evf(\tilde{\theta}-\theta)=\tilde{\theta}-\theta,
 \]
 which by Lemma \ref{Lem:FixedPointsOfLieE} implies,
 \[
  \tilde{\theta}-\theta\equiv(\tilde{\theta}-\theta)(0)=0.
 \]
 
 To \emph{ii)}: In light of \emph{i)} it holds:
 \begin{align*}
  \mathcal{L}_\evf(\dd\theta+\omega\wedge\theta)=&\dd\mathcal{L}_\evf\theta+\mathcal{L}_\evf\omega\wedge\theta+\omega\wedge\mathcal{L}_\evf\theta\\
  =&\dd(\omega\cdot\evf)+S(\evf,\theta)\wedge\theta+\omega\wedge\dd v+\omega\wedge(\omega\cdot\evf)\\
  =&(\dd\omega)\cdot\evf-\omega\wedge\dd v+S(\evf,\theta)\wedge\theta+\omega\wedge\dd v+(\omega\wedge\omega)\cdot\evf\\
  =&(\dd\omega+\omega\wedge\omega)\cdot\evf+S(\evf,\theta)\wedge\theta\\
  =&(\dd\omega+\omega\wedge\omega-S(\theta,\theta))\cdot\evf,
 \end{align*}
 where the last equality follows from the fact that $S$ takes values in $K(\frak{g})$, and so it holds:\\ $S(\evf,\theta)\wedge\theta=-S(\theta,\theta)\cdot\evf$.
 \end{proof}
 
 We now have all the ingredients to prove Theorem \ref{Thm:MainThm}:
 
 \begin{theorem}\label{Thm:MainThm}
  Let $U\subset V$ a star-shaped around $0$ open subset, let $S\colon U\longto K(\frak{g})$ be a real-analytic map, $\theta$ the analytic solution to the singular initial value problem
 \[
  \begin{cases}
   \mathcal{L}_\evf(\mathcal{L}_\evf-\Id)\theta=S(\evf,\theta)\evf,\\
   \theta_0=\Id,\,\dd\theta_0=0,
  \end{cases}
 \]
and suppose $\omega\coloneqq I(S(\evf,\theta))$ satisfies the consistency relation $\dd\omega+\omega\wedge\omega=S(\theta,\theta)$. Then there exists a unique torsion-free analytic connection $\nabla$ on a sufficiently small open neighborhood $U'\subset U$ of $0$ such that for all $v\in U'$:
  \begin{equation}\label{Eq:S=PR}
   S_v=P_{1,0}\cdot \Rnabla_{\gamma_v(1)},
  \end{equation}
  where $P_{1,0}$ denotes the inverse of the parallel translation map along the radial geodesic $\gamma_v(t)\coloneqq tv$, and $\Rnabla$ denotes the curvature tensor of the connection $\nabla$.
 \end{theorem}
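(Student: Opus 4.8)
The plan is to recognise the given pair $(\theta,\omega)$ as the pullbacks of the tautological and connection forms along an \emph{exponential framing}, and to run the constructions of Section~\ref{Sec:NecConditions} backwards. Since $\theta$ is analytic and $\theta_0=\Id\in\GL(V)$, on a sufficiently small star-shaped neighbourhood $U'\subset U$ of $0$ the endomorphism field $\theta\colon U'\to\End(V)$ takes values in $\GL(V)=G$; I would then put $h\coloneqq\theta^{-1}\colon U'\to G$ (so $h$ is analytic, $h(0)=\Id$, and $h^{-1}=\theta$ as an $\End(V)$-valued function) and define the Christoffel form $\Gamma\coloneqq\Ad(h)\circ\omega-\dd h\cdot h^{-1}\in\Omega^1(U',\frg)$ — the gauge transform of $\omega$ by $h$ chosen exactly so that, for the section $\sigma\coloneqq\ve\cdot h$ of $F(U')=U'\times G$, one has $\sigma^*\omega^\nabla=\omega$ and $\sigma^*\theta^\nabla=h^{-1}\dd v=\theta$ (cf.\ \eqref{Eq:hGauge}). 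Let $\nabla$ be the analytic connection on $TU'=U'\times V$ with Christoffel symbols $\Gamma$.

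Before extracting the curvature identity I would verify three facts. \emph{(a) $\iota_\evf\theta=\evf$ and $\iota_\evf\omega=0$.} Using $\theta=\dd v+I(\omega\cdot\evf)$ from Proposition~\ref{Prop:ThetaProperties}(i), the definition $\omega=I(S(\evf,\theta))$, and the fact that $S$ is $K(\frg)$-valued (so $S_v(v,v)=0$), one finds that $g\coloneqq\iota_\evf\theta-\evf$ satisfies $g=I\big((\iota_\evf\omega)\cdot\evf\big)$ and $\iota_\evf\omega=I\big(v\mapsto S_v(v,g_v)\big)$, with $g_0=0$; comparing lowest-order Taylor terms in these two relations shows that $g$ must vanish to arbitrarily high order, hence $g\equiv0$, and then $\iota_\evf\omega=I(v\mapsto S_v(v,v))=0$. \emph{(b) $\nabla$ is torsion-free.} By Proposition~\ref{Prop:ThetaProperties}(ii) and the consistency hypothesis $\dd\omega+\omega\wedge\omega=S(\theta,\theta)$ one gets $\mathcal{L}_\evf(\dd\theta+\omega\wedge\theta)=0$; since $I\mathcal{L}_\evf=\Id$ on $2$-forms (see \eqref{Eq:LeI}) this forces $\dd\theta+\omega\wedge\theta=0$, and substituting $\theta=h^{-1}\dd v$ into this identity yields $\Gamma\wedge\dd v=0$, i.e.\ $\Gamma$ is symmetric. \emph{(c) The rays $t\mapsto tv$ are the radial geodesics of $\nabla$ and $\sigma$ is the exponential framing at $0$.} From (a) one computes $\Gamma_v(v)(v)=0$, so $t\mapsto tv$ solves the geodesic equation; hence $\exp_0^\nabla=\Id_{U'}$ and the standard chart is a normal chart. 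Moreover $\sigma(0)=u_0$ and, because $\sigma^*\omega^\nabla(\evf)=\iota_\evf\omega=0$, each curve $t\mapsto\sigma(tv)$ is $\nabla$-horizontal; uniqueness of horizontal lifts (cf.\ \eqref{Eq:HorizontalCurvesInFrameBundle}, \eqref{Eq:ExpFramingShort}) then identifies $\sigma$ with the exponential framing and gives $h(v)=P_{\gamma_v}$, whence $h(v)^{-1}=P_{1,0}$.

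The curvature identity should then drop out quickly. Let $R\in C^\infty(F(U'),K(\frg))^{G}$ be the equivariant curvature map of $\nabla$. Pulling \eqref{Eq:F-RThetaTheta} back along $\sigma$ gives $\sigma^*F^\nabla=\hat R(\theta,\theta)$, where $\hat R_v=\rho_2(h(v)^{-1})R^\nabla_v$, while on the other hand $\sigma^*F^\nabla=\dd\omega+\omega\wedge\omega$, which by hypothesis equals $S(\theta,\theta)$. Hence $\hat R(\theta,\theta)=S(\theta,\theta)$, and since $\theta_v\in\GL(V)$ for $v\in U'$ this forces $\hat R_v=S_v$, i.e.\ $S_v=\rho_2(P_{1,0})R^\nabla_v=P_{1,0}\cdot R^\nabla_{\gamma_v(1)}$, which is \eqref{Eq:S=PR}.

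For uniqueness I would show that any torsion-free analytic connection $\nabla'$ near $0$ satisfying \eqref{Eq:S=PR} must coincide with $\nabla$. Transporting a fixed frame along the rays $t\mapsto tv$ produces an analytic $h'\colon U'\to G$ with $h'(0)=\Id$; putting $\hat\theta'\coloneqq h'^{-1}\dd v$ and $\hat\omega'\coloneqq(\ve h')^*\omega^{\nabla'}$, one has $\iota_\evf\hat\omega'=0$, $\hat\theta'_0=\Id$, $\dd\hat\theta'+\hat\omega'\wedge\hat\theta'=0$ (torsion-freeness), and — since \eqref{Eq:S=PR} says precisely that $\rho_2(h'(v)^{-1})R^{\nabla'}_v=S_v$, and \eqref{Eq:F-RThetaTheta} pulls back along any section — $\dd\hat\omega'+\hat\omega'\wedge\hat\omega'=S(\hat\theta',\hat\theta')$. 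A bootstrap as in (a), again exploiting $S_v(v,v)=0$, forces $\iota_\evf\hat\theta'=\evf$, i.e.\ the rays are $\nabla'$-geodesics and $\ve h'$ is the exponential framing of $\nabla'$; Proposition~\ref{Prop:IVPThetaHat} then shows $\hat\theta'$ solves \eqref{Eq:MainIVP}, so $\hat\theta'=\theta$ by Proposition~\ref{Prop:MainIVP}, whence $h'=h$, and Proposition~\ref{Prop:ThetaHat-OmegaHat}(ii) gives $\hat\omega'=I(S(\evf,\theta))=\omega$; thus $\nabla'$ and $\nabla$ have the same Christoffel form. The part I expect to be the real obstacle is the singular-ODE bookkeeping: establishing the analytic solution $\theta$ (handled by Proposition~\ref{Prop:MainIVP}) and, above all, the bootstrap arguments showing $\iota_\evf\omega=0$ — equivalently, that the straight rays are geodesics, which is exactly what makes the exponential-framing picture available — both for the constructed $\nabla$ and for a competitor $\nabla'$; once that is secured, the equality $S=\hat R$ is essentially immediate from the consistency relation $\dd\omega+\omega\wedge\omega=S(\theta,\theta)$ together with the invertibility of $\theta_v$.
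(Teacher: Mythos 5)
Your existence argument is essentially the paper's proof: you gauge-transform $\omega$ by $g=\theta$ (your $h^{-1}$), get the same Christoffel form $\Gamma=\Ad(g^{-1})\circ\omega+g^{-1}\dd g$, obtain torsion-freeness from Proposition \ref{Prop:ThetaProperties}(ii) plus the consistency relation (forcing $\dd\theta+\omega\wedge\theta=0$), and read off $R^\nabla=\rho_2(g^{-1})S$. Your step (a) — the Taylor bootstrap giving $\iota_\evf\omega=0$, hence $\iota_\evf\theta=\evf$, hence that $g^{-1}(v)=P_{\gamma_v}$ and the rays are geodesics — is correct and in fact supplies detail that the paper compresses into ``follows from the mere definition of $\nabla$''; that part of your write-up is a genuine improvement in explicitness, not a deviation.

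The gap is in your uniqueness paragraph. You claim that for an arbitrary torsion-free analytic competitor $\nabla'$ satisfying \eqref{Eq:S=PR}, a ``bootstrap as in (a)'' forces $\iota_\evf\hat\theta'=\evf$, i.e.\ that the straight rays are automatically $\nabla'$-geodesics. This does not follow, and is in fact false: take $S\equiv0$ and $\nabla'=\psi^*\dd$ for an analytic diffeomorphism $\psi$ fixing $0$ with $\dd\psi_0=\Id$ but $\psi\neq\Id$. Then $R^{\nabla'}\equiv0$, so \eqref{Eq:S=PR} holds trivially; the frame obtained by parallel transport along rays is $h'(v)=(\dd\psi_v)^{-1}$, so $\hat\omega'=0$ (in particular $\iota_\evf\hat\omega'=0$), $\hat\theta'=\dd\psi$ satisfies $\hat\theta'_0=\Id$, $\dd\hat\theta'+\hat\omega'\wedge\hat\theta'=0$ and $\dd\hat\omega'+\hat\omega'\wedge\hat\omega'=S(\hat\theta',\hat\theta')$ — all the hypotheses you feed into the bootstrap — yet $\iota_\evf\hat\theta'(v)=\dd\psi_v(v)\neq v$ in general, and $\nabla'\neq\nabla=\dd$. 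The structural reason your argument from (a) does not transfer is that there the relation $\omega=I(S(\evf,\theta))$ has $\evf$ itself in the first slot of $S$ by definition, whereas for a competitor one only gets $\mathcal{L}_\evf\hat\omega'=S(\iota_\evf\hat\theta',\hat\theta')$, so the unknown $\iota_\evf\hat\theta'-\evf$ enters both sides and is not forced to vanish (for $S=0$ it is completely unconstrained). Consequently uniqueness can only hold, and should only be claimed, among connections for which $t\mapsto tv$ are geodesics (equivalently, for which the given chart is a normal chart at $0$ and $\ve\cdot h'$ is the exponential framing) — which is how the phrase ``the radial geodesic $\gamma_v(t)=tv$'' in \eqref{Eq:S=PR} is to be read; the paper's proof itself gives no separate uniqueness argument. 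Under that normalization your remaining chain (Proposition \ref{Prop:IVPThetaHat} plus \eqref{Eq:S=PR} shows $\hat\theta'$ solves \eqref{Eq:MainIVP}, Proposition \ref{Prop:MainIVP} gives $\hat\theta'=\theta$, then $h'=h$ and Proposition \ref{Prop:ThetaHat-OmegaHat}(ii) gives $\hat\omega'=\omega$, hence the same Christoffel form) is correct; you should delete the claim that the geodesic property of the rays can be bootstrapped from \eqref{Eq:S=PR} alone and instead make it part of the hypothesis on the competitor.
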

 \begin{proof}
  Let $U'$ be the open neighborhood of $0$ in which $\theta$ is real-analytic. Let $g\colon U'\longto G$ be the map $g\coloneqq\Id+I(\omega\cdot\evf)$. Set $\Gamma\in\Omega^1(U',\frak{g})$ as 
  \[
   \Gamma\coloneqq\Ad(g^{-1})\circ\omega+g^*\mu_G=\Ad(g^{-1})\circ\omega+g^{-1}\dd g.
  \]
  
  It thus holds:
  \begin{align*}
   \Gamma\wedge\dd v=&g^{-1}(\omega g+\dd g)\wedge\dd v\\
   =&g^{-1}(\omega\wedge g\dd v+\dd g\wedge\dd v)\\
   =&g^{-1}(\omega\wedge\theta+\dd\theta)\\
   =&0,
  \end{align*}
  where the last equality follows from the fact that $\dd\omega+\omega\wedge\omega=S(\theta,\theta)$ and Proposition \ref{Prop:ThetaProperties}.
  
  From this we thus conclude, that the analytic map $\Gamma\colon U'\longto V^*\otimes\frak{g}$ actually takes values in $\Sym^2V^*\otimes V$, and so it uniquely defines a torsion-free connection $\nabla$ by the formula:
  \[
   \nabla_Xs\coloneqq\dd s(X)+\Gamma(X)s,
  \]
  for every smooth sections $X,s\colon U'\longto V$.
  
  The curvature of $\nabla$ is then given by the formula
  \[
   \Rnabla=\dd\Gamma+\Gamma\wedge\Gamma.
  \]
  
  A direct computation shows
  \[
   \dd\Gamma=\Ad(g^{-1})\dd\omega-\Ad(g^{-1})\omega\wedge g^*\mu_G-g^*\mu_G\wedge\Ad(g^{-1})\omega
  \]
  and so it holds
  \[
   \Rnabla=\Ad(g^{-1})(\dd\omega+\omega\wedge\omega)+g^*\overbrace{(\dd\mu_G+\mu_G\wedge\mu_G)}^{=0}=\Ad(g^{-1})(\dd\omega+\omega\wedge\omega).
  \]
  
  Since it also holds that $\dd\omega+\omega\wedge\omega=S(\theta,\theta)=S(g,g)$, it follows from \eqref{Eq:RepRhom}:
  \[
   \Rnabla=\Ad(g^{-1})S(g,g)=\rho_2(g^{-1})S,
  \]
  which at once shows that $R^\nabla$ indeed takes values in $K(\frak{g})$ and,
  \[
   S=\rho_2(g)\Rnabla.
  \]
  
  Finally, that for the analytic map $g^{-1}\colon U'\longto G$, holds $g(v)^{-1}=P_{\gamma_v}$ follows from the mere definition of $\nabla$. 
  
  In sum it thus holds for every $v\in U'$:
  \[
   S_v=\rho_2(P_{\gamma_v}^{-1})\Rnabla_v=P_{1,0}\cdot \Rnabla_{\gamma_v(1)}.
  \]
 \end{proof}
 
 In the context of Theorem \ref{Thm:MainThm}, we obtain the following:
 \begin{corollary}
  Let $S$, $\theta=g\dd v$, $\omega$, and $U'$ be as in Theorem \ref{Thm:MainThm}. It thus holds, the map
  \[
   (\dd S+(\rho_2)_*(\omega)\wedge S)(g^{-1})\colon U'\longto V^*\otimes K(\frak{g}),
  \]
  takes values in the subspace $K^1(\frak{g})$.
 \end{corollary}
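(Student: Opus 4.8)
The plan is to derive the corollary as an immediate consequence of Proposition \ref{Prop:CurvatureAndGauges}, applied to the torsion-free connection $\nabla$ furnished by Theorem \ref{Thm:MainThm}, in the gauge given by the exponential framing. First I would record the dictionary between the notation of Section \ref{Subsec:PrincBundlePicture} and that of Theorem \ref{Thm:MainThm}: from the proof of the theorem, $g(v)^{-1}=P_{\gamma_v}$, so the exponential-framing gauge $h$ of $\nabla$ (with $h(v)=P_{\gamma_v}$) is precisely $g^{-1}$, and hence $\hat\theta=\sigma^*\theta^\nabla=h^{-1}\,\dd v=g\,\dd v=\theta$. Since the theorem also gives $R^\nabla=\rho_2(g^{-1})S$, the equivariant curvature map read in the exponential framing is $\hat R=\rho_2(h^{-1})R^\nabla=\rho_2(g)R^\nabla=S$, and then part \emph{ii)} of Proposition \ref{Prop:ThetaHat-OmegaHat} yields $\hat\omega=I(\hat R(\evf,\hat\theta))=I(S(\evf,\theta))=\omega$. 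Thus in the exponential framing the triple $(\hat R,\hat\omega,\hat\theta)$ is exactly $(S,\omega,\theta)$, so no genuinely new information is needed beyond what Sections \ref{Subsec:HolonomyPrelim}--\ref{Sec:MainResult} already provide.

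With this dictionary in hand, I would apply Proposition \ref{Prop:CurvatureAndGauges} to $\nabla$ with the gauge map $g^{-1}$, so that the associated section is $s=\sigma$, and $\overline R=\rho_2\big((g^{-1})^{-1}\big)R^\nabla=\rho_2(g)R^\nabla=S$, $\overline\omega=\sigma^*\omega^\nabla=\omega$. The proposition then reads
\[
 \big(\dd S+(\rho_2)_*(\omega)\wedge S\big)(g^{-1})=\rho_3(g)\,\nabla\Rnabla .
\]
Since $(\nabla\Rnabla)_v\in K^1(\hol_v(\nabla))\subset K^1(\frak{g})$ for every $v\in U'$ (the consequence of the Ambrose--Singer theorem recorded right after Proposition \ref{Prop:Ambrose-Singer}) and $\rho_3$ restricts to a representation on $K^{(1)}(\frak{g})=K^1(\frak{g})$, the right-hand side takes values in $K^1(\frak{g})$; therefore so does $\big(\dd S+(\rho_2)_*(\omega)\wedge S\big)(g^{-1})$, which is exactly the assertion.

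The only delicate point is purely notational: one must be careful to invoke Proposition \ref{Prop:CurvatureAndGauges} with the gauge $g^{-1}$ rather than $g$ (equivalently, to verify $h=g^{-1}$), so that the operator it produces is precisely the one, $\big(\dd S+(\rho_2)_*(\omega)\wedge S\big)(g^{-1})$, named in the statement, and to keep the inverses inside $\rho_2$ and $\rho_3$ consistent throughout. Once that matching is in place no further computation is required; alternatively, one could bypass Proposition \ref{Prop:CurvatureAndGauges} altogether and prove the displayed identity directly by differentiating $S=\rho_2(g)R^\nabla$ with Lemma \ref{Lem:AuxiliaryGaugeLemma} together with the structure equation \eqref{Eq:CovariantDerR}, but invoking the already-established proposition is the cleaner route.
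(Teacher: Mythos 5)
Your proposal is correct and follows essentially the same route as the paper: identify the exponential-framing gauge $h=g^{-1}$ so that $\overline R=S$ and $\overline\omega=\sigma^*\omega^\nabla=\omega$, apply Proposition \ref{Prop:CurvatureAndGauges} to get $(\dd S+(\rho_2)_*(\omega)\wedge S)(g^{-1})=\rho_3(g)\nabla\Rnabla$, and conclude via the second Bianchi identity and the fact that $\rho_3$ preserves $K^1(\frak{g})$. The only difference is that you spell out the gauge dictionary (via Proposition \ref{Prop:ThetaHat-OmegaHat}) more explicitly than the paper does, which is a harmless elaboration.
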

 \begin{proof}
  From Theorem \ref{Thm:MainThm}, it immediately follows that $\omega=\sigma^*\omega^\nabla$, where $\omega^\nabla$ is the connection form associated to the torsion-free connection $\nabla$ from the Theorem, and $\sigma=(\Id_{U'},g^{-1})$ denotes its associated exponential framing.
  
  From Proposition \ref{Prop:CurvatureAndGauges}, and the fact that $S=\rho_2(g)R^\nabla$ it thus follows that
  \[
   (\dd S+(\rho_2)_*(\omega)\wedge S)(g^{-1})=\rho_3(g)\nabla\Rnabla,
  \]
  from which the result follows, since $\Rnabla$ satisfies the second Bianchi identity, which amounts to the fact that $\nabla \Rnabla$ takes values in $K^1(\frak{g})$.
 \end{proof}
 
 \section{Holonomy of Torsion-free Connections}\label{Sec:Holonomy}
 
 We are now concerning ourselves with some relevant consequences of Theorem \ref{Thm:MainThm} in the direction of Holonomy Theory. In particular, the holonomy of the torsion-free connection induced by the analytic map $S$ in Theorem \ref{Thm:MainThm} can conveniently be characterized by it. Concretely, one has the following:
 
 \begin{theorem}\label{Thm:Thm2}
  Let $S\colon U\longto K(\frg)$, $\theta\in\Omega^1(U,V)$, $\omega\in\Omega^1(U,\frg)$ as in Theorem \ref{Thm:MainThm}. Let $\nabla$ be the torsion-free connection on $TU'$ which satisfies \eqref{Eq:S=PR}. It holds:
  \begin{equation}\label{Eq:holOfNablaS}
   \hol_0(\nabla)=\gen\qty{S_v(x,y)\;\vert\;v\in U',\,x,y\in V}.
  \end{equation}
 \end{theorem}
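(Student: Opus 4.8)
The inclusion $\gen\{S_v(x,y):v\in U',\ x,y\in V\}\subseteq\hol_0(\nabla)$ is the cheap half: by \eqref{Eq:S=PR} every $S_v(x,y)$ is of the form $(P_{\gamma_v}^{-1}\cdot\Rnabla)(x,y)$ for the radial geodesic $\gamma_v$, which is in particular a path issuing from $0$, so part (ii) of the Ambrose--Singer Theorem (Proposition \ref{Prop:Ambrose-Singer}) places it in $\hol_0(\nabla)$. Write $\frh:=\gen\{S_v(x,y):v\in U',\ x,y\in V\}$. The substance of the theorem is the reverse inclusion, which I would obtain by exhibiting $\frh$ as the structure algebra of a reduction of the frame bundle and invoking the minimality of the holonomy bundle (Proposition \ref{Prop:SmallestReduction}).

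Two preparatory points. By the proof of Theorem \ref{Thm:MainThm} the exponential framing of $\nabla$ is $\sigma=(\Id_{U'},g^{-1})$, so $\sigma^*\omega^\nabla=\omega$ and, by the consistency relation, $\sigma^*\Fnabla=\dd\omega+\omega\wedge\omega=S(\theta,\theta)$; since $\theta_v=g(v)\,\dd v$ with $g(v)\in\GL(V)$, the values $S(\theta,\theta)_v(w_1,w_2)=S_v(g(v)w_1,g(v)w_2)$ run over all of $\frh$, so $\sigma^*\Fnabla$ is $\frh$-valued, and then $\omega=I(\iota_\evf\sigma^*\Fnabla)$ by Proposition \ref{Prop:ThetaHat-OmegaHat}(ii) is $\frh$-valued too, as $I$ does not enlarge the target. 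The second --- and decisive --- point is that $\frh$ is a Lie subalgebra of $\gl(V)$. Taylor-expanding \eqref{Eq:S=PR} along radial geodesics (equivalently, reading \eqref{Eq:R=AduF} in the exponential framing, where \eqref{Eq:OmegaHatXiVanishes} says the framing is horizontal in the radial direction) one gets $S_v=\sum_{k\ge0}\tfrac{1}{k!}(\nabla^k_{v,\dots,v}\Rnabla)_0$, so that $\frh=\gen\{(\nabla^k_{x_1,\dots,x_k}\Rnabla)_0(x,y):k\ge0,\ x_i,x,y\in V\}$. That this span is closed under bracket is the classical consequence of the Ricci identity for the $(1,3)$-tensor $\Rnabla$ together with the second Bianchi identity: already
\[
 [\Rnabla_0(x,y),\Rnabla_0(z,w)]=(\nabla^2_{x,y}\Rnabla-\nabla^2_{y,x}\Rnabla)_0(z,w)+\Rnabla_0(\Rnabla_0(x,y)z,w)+\Rnabla_0(z,\Rnabla_0(x,y)w)\in\frh,
\]
and iterating the same bookkeeping gives $[(\nabla^j\Rnabla)_0(\cdot,\cdot),(\nabla^k\Rnabla)_0(\cdot,\cdot)]\in\frh$ in general. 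I expect this algebraic step --- in particular pinning down the identification of the Taylor data of $S$ with the covariant derivatives of $\Rnabla$ at $0$ --- to be the main obstacle; the remaining steps only rearrange objects already constructed.

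Granting that $\frh$ is a subalgebra, let $H\subseteq\GL(V)$ be the connected immersed Lie subgroup with Lie algebra $\frh$ and put $Q:=\{\sigma(v)\cdot a:v\in U',\ a\in H\}\subseteq F(U')=U'\times\GL(V)$. Over each $v$ the fibre of $Q$ is the coset $g(v)^{-1}H$, so $Q$ is an immersed principal $H$-subbundle of $F(U')$ containing $u_0=\sigma(0)$. Using the right-invariance of the horizontal distribution and that $\sigma^*\omega^\nabla=\omega$ is $\frh$-valued --- so that at $\sigma(v)\cdot a$ the restriction of $\omega^\nabla$ to $TQ$ is $\Ad(a^{-1})$ applied to $\frh$-valued data, hence again $\frh$-valued since $H$ is connected with subalgebra $\frh$ --- one checks that $(Q,\omega^\nabla|_{TQ})$ is a reduction of $(F(U'),\omega^\nabla)$. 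By Proposition \ref{Prop:SmallestReduction} the holonomy bundle $P^\nabla(u_0)$ lies inside $Q$ with the connection forms agreeing, whence $\Hol_{u_0}(\omega^\nabla)\subseteq H$ and therefore $\hol_0(\nabla)=\hol_{u_0}(\omega^\nabla)\subseteq\mathrm{Lie}(H)=\frh$. Together with the cheap inclusion this yields \eqref{Eq:holOfNablaS}.
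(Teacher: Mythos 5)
Your proposal follows essentially the same route as the paper: the easy inclusion via part (ii) of the Ambrose--Singer theorem, the observation that $\omega=\sigma^*\omega^\nabla$ takes values in $\frh$, the construction of the reduced bundle $\{\sigma(v)a\,:\,v\in U',\,a\in H\}$, and the appeal to the minimality of the holonomy bundle (Proposition \ref{Prop:SmallestReduction}) to conclude $\Hol_0(\nabla)\subset H$, hence $\hol_0(\nabla)\subset\frh$.

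The one place you go beyond the paper is your insistence on checking that $\frh=\gen\{S_v(x,y)\}$ is a Lie subalgebra of $\frg$; the paper simply takes the connected subgroup $H$ with Lie algebra $\frh$ without comment, so you are right that this is a step the argument needs. However, your sketch of it is not yet a proof: the radial Taylor expansion $S_v=\sum_k\tfrac{1}{k!}(\nabla^k_{v,\dots,v}\Rnabla)_0$ only exhibits $\frh$ as the span of the \emph{symmetrized} covariant derivatives of $\Rnabla$ at $0$, whereas the bracket identity you display produces the antisymmetrized part $(\nabla^2_{x,y}\Rnabla-\nabla^2_{y,x}\Rnabla)_0$, which is not obviously contained in that symmetrized span. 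Identifying $\frh$ with the full span of all $(\nabla^k_{x_1,\dots,x_k}\Rnabla)_0(x,y)$ (the infinitesimal holonomy algebra, which is classically closed under brackets) requires an inductive use of the Ricci identity to control the antisymmetric parts by lower-order terms, not just ``the same bookkeeping''; as written, your argument only shows that the brackets land in this larger span, which is circular until the two spans are shown to coincide.
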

 \begin{proof}
 We denote the right-hand side of \eqref{Eq:holOfNablaS} by $\frh$. Because of \eqref{Eq:S=PR} and the Ambrose-Singer Theorem \ref{Prop:Ambrose-Singer}, we immediately obtain $\frh\subset\hol_0(\nabla)$.
 
 Let $H$ be the connected Lie subgroup of $G$ which has $\frh$ as its Lie algebra, and set
 \[
  F_H\coloneqq\bigsqcup_{v\in U'}\qty{\sigma(v)g\;\vert\;v\in U',\,g\in H},
 \]
 where $\sigma\colon U'\longto U'\times G$ denotes the exponential framing associated to $\nabla$. That is,\\ $\sigma(v)=(v,h(v))\coloneqq(v,P_{\gamma_v})$. Notice that $F_H\longto U'$ naturally admits the structure of an $H$-principal bundle, and as such, it is isomorphic to the $H$-bundle $U'\times H\longto U'$.
 
 Let $\gamma\colon[0,1]\longto F_H$, and set $\gamma(0)=\sigma(v)g$. Thus there exists curves $\gamma_1\colon[0,1]\longto U'$ with $\gamma_1(0)=v$, $\gamma_2\colon[0,1]\longto H$ with $\gamma_2(0)=g$ such that $\gamma(t)=\sigma(\gamma_1(t))\gamma_2(t)$.
 
 From this we thus get:
 \begin{align*}
  \omega^\nabla_{\sigma(v)g}(\gamma'(0))=&\Ad(g^{-1})\circ\omega_{\sigma(v)}^\nabla(\dd R_{g^{-1}}\gamma'(0))\\
  =&\Ad(g^{-1})\circ\omega_{\sigma(v)}^\nabla\qty(\eval{\dv{}{t}}_0\sigma(\gamma_1(t))\gamma_2(t)g^{-1})\\
  =&\Ad(g^{-1})\circ\omega_{\sigma(v)}^\nabla\qty((\gamma_1'(0),h(v)\gamma_2'(0)g^{-1}+\dd_vh(\gamma_1'(0))))\\
  =&\Ad(g^{-1})(\gamma_2'(0)g^{-1}+\omega_v(\gamma_1'(0)))\\
  =&g^{-1}\gamma_2'(0)+\Ad(g^{-1})\omega_v(\gamma_1'(0))\\
  =&\dd L_{g^{-1}}(\gamma_2'(0))+\Ad(g^{-1})\omega_v(\gamma_1'(0)).
 \end{align*}
 
 Now, because of the fact that $\omega=I(S(\evf,\theta))$ is analytic on $U'$, with power series given by \eqref{Eq:FormalPowerSeriesI}, we then conclude that $\omega$ takes in fact values in $\frh$, which in turn implies
 \[
  \omega_{\sigma(v)g}^\nabla(\gamma'(0))=\dd L_{g^{-1}}(\gamma_2'(0))+\Ad(g^{-1})\omega_v(\gamma_1'(0))\in\frh.
 \] 
 
 In other words, we have just proved that the restricted connection form $\omega^\nabla\vert_{TF_H}$ takes values in $\frh$, whichs amounts to the fact that $(F(U'),\omega^\nabla)$ is reducible to $(F_H,\omega^\nabla\vert_{TF_H})$ \cite[Chapter~4, Satz~4.2]{baum2009eichfeldtheorie}, and so, by Proposition \ref{Prop:SmallestReduction} we conclude in particular, that for every $u\in F_H$,
 \[
  P^\nabla(u)\subset F_H=U'\times H.
 \]
 
 By setting $u_0\coloneqq\sigma(0)=(0,\Id)\in F_H$, we obtain:
 \[
  P^\nabla(u_0)=U'\times\Hol_{u_0}(\omega^\nabla)=U'\times\Hol_0(\nabla)\subset U'\times H.
 \]
 
 That is, $\Hol_0(\nabla)\subset H$, which together with the observation that $\frh\subset\hol_0(\nabla)$ yields 
 \[
  \frh=\hol_0(\nabla).
 \]
 \end{proof}
 
 In light of Theorem \ref{Thm:Thm2} we obtain at once the following useful
 \begin{corollary}\label{Cor:HolonomyInSubalg}
  In the situation of Theorem \ref{Thm:MainThm}. The holonomy algebra of the torsion-free connection induced by the analytic map $S$ is contained in the Lie subalgebra $\frh\subset\frg$ if, and only if, the map $S$ takes values in the subspace $K(\frh)$.
 \end{corollary}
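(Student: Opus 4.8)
The plan is to deduce the statement directly from Theorem \ref{Thm:Thm2}, which identifies the holonomy algebra as $\hol_0(\nabla)=\gen\{S_v(x,y)\mid v\in U',\,x,y\in V\}$. The only algebraic input needed beyond that is the elementary identity
\[
 K(\frh)=K(\frg)\cap\qty(\extp^2V^*\otimes\frh),
\]
valid for any Lie subalgebra $\frh\subset\frg$: indeed, an element of $\extp^2V^*\otimes\frh$ satisfying the first Bianchi identity is by definition in $K(\frh)$, while membership in $K(\frg)$ is exactly the first Bianchi identity. Since $S$ is, by the hypotheses inherited from Theorem \ref{Thm:MainThm}, a $K(\frg)$-valued map, every $S_v$ automatically satisfies \eqref{Eq:1Bianchi}; the whole question therefore reduces to whether the values $S_v(x,y)$ lie in $\frh$.

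For the implication ``$S$ takes values in $K(\frh)$ $\Rightarrow$ $\hol_0(\nabla)\subset\frh$'': if $S_v\in K(\frh)\subset\extp^2V^*\otimes\frh$ for every $v\in U'$, then each generator $S_v(x,y)$ of the right-hand side of \eqref{Eq:holOfNablaS} lies in $\frh$. Since $\frh$ is a linear subspace (in fact a subalgebra) of $\frg$, taking the span preserves this, so $\hol_0(\nabla)=\gen\{S_v(x,y)\}\subset\frh$.

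For the converse: assume $\hol_0(\nabla)\subset\frh$. By Theorem \ref{Thm:Thm2}, $S_v(x,y)\in\gen\{S_w(a,b)\}=\hol_0(\nabla)\subset\frh$ for all $v\in U'$ and all $x,y\in V$. Hence $S_v\in\extp^2V^*\otimes\frh$ for each $v$; combined with $S_v\in K(\frg)$ this gives $S_v\in K(\frg)\cap(\extp^2V^*\otimes\frh)=K(\frh)$, i.e.\ $S$ takes values in $K(\frh)$.

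I do not expect any genuine obstacle here: the content is entirely carried by Theorem \ref{Thm:Thm2}, and what remains is the bookkeeping identity $K(\frh)=K(\frg)\cap(\extp^2V^*\otimes\frh)$ together with the observation that the span of a set of vectors already contained in a subspace is again contained in that subspace. The only point worth stating explicitly is that the first Bianchi identity for $S_v$ is not an extra assumption but is built into the hypothesis ``$S\colon U\longto K(\frg)$'' of Theorem \ref{Thm:MainThm}, which is what makes the passage from $\extp^2V^*\otimes\frh$-valued to $K(\frh)$-valued automatic.
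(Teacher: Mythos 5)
Your proposal is correct and follows exactly the route the paper intends: the paper gives no written proof, stating only that the corollary follows ``at once'' from Theorem \ref{Thm:Thm2}, and your argument—combining \eqref{Eq:holOfNablaS} with the observation $K(\frh)=K(\frg)\cap(\extp^2V^*\otimes\frh)$—is precisely that deduction, spelled out in both directions.
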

 
 \bibliographystyle{alpha}
 \bibliography{CurvOfTorsionfreeConn}
\end{document}